\documentclass{article}
\usepackage[
	letterpaper,
	left=1.15in,
	right=1.15in,
	top=0.9in,
	bottom=0.95in
]{geometry}
\usepackage{graphicx}

\usepackage{mathtools, amssymb, amsfonts, amsthm}
\usepackage{mathdots}
\usepackage{subcaption}
\usepackage{xcolor}
\usepackage{enumitem}
\usepackage[numbers]{natbib}

\usepackage{algorithm}
\usepackage{algpseudocode}

\usepackage{hyperref,xurl}
\usepackage{colortbl}
\definecolor{niceblue}{rgb}{0.0,0.19,0.56}

\usepackage{threeparttable}
\usepackage{pifont}

\newcommand{\algname}[1]{\textsc{#1}}
\definecolor{bgcolor2}{RGB}{235,245,255}

\usepackage[capitalise]{cleveref}

\makeatletter
\renewcommand*\env@matrix[1][*\c@MaxMatrixCols c]{\hskip -\arraycolsep \let\@ifnextchar\new@ifnextchar \array{#1}}
\makeatother

\usepackage{array,booktabs,multirow,makecell}

\usepackage{nicefrac}

\allowdisplaybreaks
\title{Dual-Anchor Acceleration Is Near-Optimal for Stochastic Monotone Root-Finding}

\author{TaeHo Yoon\textsuperscript{*}
\and Nicolas Loizou\textsuperscript{*}
}

\date{}

\usepackage{amsmath,amsfonts,amssymb,amsthm,bm,xcolor,mdframed}

\def\1{\bm{1}}

\DeclareMathAlphabet{\mathsfit}{\encodingdefault}{\sfdefault}{m}{sl}
\SetMathAlphabet{\mathsfit}{bold}{\encodingdefault}{\sfdefault}{bx}{n}

\newcommand{\cB}{{\mathcal{B}}}

\newcommand{\cO}{{\mathcal{O}}}

\usepackage[bb=boondox]{mathalfa}

\usepackage{xparse}
\DeclareFontFamily{U}{ntxmia}{}
\DeclareFontShape{U}{ntxmia}{m}{it}{<-> ntxmia }{}
\DeclareFontShape{U}{ntxmia}{b}{it}{<-> ntxbmia }{}
\DeclareSymbolFont{lettersA}{U}{ntxmia}{m}{it}
\SetSymbolFont{lettersA}{bold}{U}{ntxmia}{b}{it}

\ExplSyntaxOn
\NewDocumentCommand{\varmathbb}{m}
 {
  \tl_map_inline:nn { #1 }
   {
    \use:c { varbb##1 }
   }
 }
\tl_map_inline:nn { ABCDEFGHIJKLMNOPQRSTUVWXYZ }
 {
  \exp_args:Nc \DeclareMathSymbol{varbb#1}{\mathord}{lettersA}{\int_eval:n { `#1+67 }}
 }
\exp_args:Nc \DeclareMathSymbol{varbbk}{\mathord}{lettersA}{169}
\ExplSyntaxOff

\newcommand{\reals}{\mathbb{R}}

\newcommand{\opF}{{\varmathbb{F}}}
\newcommand{\opG}{{\varmathbb{G}}}
\newcommand{\opH}{{\varmathbb{H}}}
\newcommand{\opI}{{\varmathbb{I}}}
\newcommand{\opJ}{{\varmathbb{J}}}

\newcommand{\opS}{{\varmathbb{S}}}
\newcommand{\opT}{{\varmathbb{T}}}

\newcommand{\inprod}[2]{\left\langle #1,#2 \right\rangle}

\newcommand{\sqnorm}[1]{\left\| #1 \right\|^2}
\newcommand{\norm}[1]{\left\| #1 \right\|}

\newcommand{\expec}[1]{\mathbb{E}\left[ #1 \right]}
\newcommand{\condexp}[2]{\mathbb{E}\left[ #1 \,\middle|\, #2 \right]}

\usepackage{thmtools}
\definecolor{shadecolor}{gray}{0.9}
\declaretheoremstyle[
headfont=\normalfont\bfseries,
notefont=\mdseries, notebraces={(}{)},
bodyfont=\normalfont,
postheadspace=0.5em,
spaceabove=\topsep,
mdframed={
  skipabove=8pt,
  skipbelow=8pt,
  hidealllines=true,
  backgroundcolor={shadecolor},
  innerleftmargin=4pt,
  innerrightmargin=4pt}
]{shaded}

\declaretheorem[style=shaded,within=section]{definition}
\declaretheorem[style=shaded,sibling=definition]{theorem}

\declaretheorem[style=shaded,sibling=definition]{assumption}
\declaretheorem[style=shaded,sibling=definition]{corollary}

\declaretheorem[style=shaded,sibling=definition]{lemma}

\declaretheorem[style=shaded,sibling=definition]{condition}

\mdfsetup{nobreak=true}
\makeatletter
\@for\thmtype:=definition,theorem,proposition,assumption,corollary,conjecture,lemma,remark,example,condition\do{\expandafter\def\csname theH\thmtype\endcsname{\thesection.\arabic{definition}}}
\makeatother

\begin{document}

\renewcommand\thefootnote{\fnsymbol{footnote}}
\footnotetext[1]{Department of Applied Mathematics and Statistics \& Mathematical Institute for Data Science, Johns Hopkins University.
\texttt{\{tyoon7, nloizou\}@jhu.edu}}

\renewcommand\thefootnote{\arabic{footnote}}
\setcounter{footnote}{0}

\maketitle

\begin{abstract}
Among distinct optimal acceleration mechanisms for deterministic monotone root-finding problems and fixed-point problems, dual-anchoring has recently been shown to admit a more robust direct stochastic extension than standard anchor acceleration.
However, without additional strong monotonicity, the existing stochastic dual-anchoring guarantee by \citet{yoonDirectAccelerationStochastic2026} has two limitations: first, it requires cocoercivity in expectation, and second, it attains only $\cO (\epsilon^{-3})$ oracle complexity, leaving a gap to the near-optimal $\widetilde{\cO} (\epsilon^{-2})$ complexity achieved by other methods.
In this work, we address both of these limitations by combining dual-anchoring with stochastic resolvent approximation and optimized variance control.
For unbiased stochastic oracles with variance bounded by $\sigma^2$, where sample operators are monotone and uniformly $L$-Lipschitz, our algorithm finds a point $x_\epsilon$ satisfying $\expec{\norm{\opF (x_\epsilon)}} \le \epsilon$ with a near-optimal oracle complexity of $\cO ( (LD / \epsilon) \ell + (\sigma^2 / \epsilon^2) \ell^2)$, where $\ell = \log (1 + LD / \epsilon)$ and $D$ is the initial distance to a solution.
This result improves the best known oracle complexity in the noise-dominated regime under these samplewise assumptions, reducing the poly-logarithmic factor from cubic to quadratic.
\end{abstract}

\section{Introduction}
\label{sec:introduction}

We consider stochastic root-finding problem for an operator $\opF \colon \reals^d \to \reals^d$:
\begin{equation}
\label{eq:problem}
	\text{find }x_\star \in \reals^d\quad\text{such that}\quad \opF (x_\star) = 0
\end{equation}
where $\opF (x) = \expec{\opF (x; \xi)}$ is monotone.
This formulation includes convex minimization \citep{BauschkeCombettes2017_convex, RyuYin2022_largescale}, convex-concave minimax optimization \citep{nemirovskiProxmethodRateConvergence2004, nesterovDualExtrapolationIts2007, juditskySolvingVariationalInequalities2011, LinJinJordan2020_nearoptimal}, fixed-point problems \citep{Krasnoselskii1955_two, Mann1953_mean, Halpern1967_fixed, Wittmann1992_approximation} or multiplayer game theory \citep{FacchineiPang2003_finitedimensional, scutariRealComplexMonotone2014}.
Our goal is to efficiently find a point with small operator norm $\norm{\opF (\cdot)}$, using only an unbiased stochastic oracle that returns a sample operator $\opF (x; \xi)$.
Here, the efficiency is measured by the number of oracle accesses required to achieve $\expec{\norm{\opF (x)}} \le \epsilon$.

In deterministic root-finding, Halpern-type or anchoring algorithms reduce the operator norm at the accelerated, optimal rate $\norm{\opF (x_N)} = \cO (1 / N)$ \citep{Kim2021_accelerated, Diakonikolas2020_halpern, yoonAcceleratedAlgorithmsSmooth2021}.
However, the acceleration achieved by these algorithms does not directly transfer to stochastic settings due to accumulation of noise.
Existing work that attempted to extend anchor acceleration to stochastic settings used diminishing variance assumption or variance reduction techniques \citep{LeeKim2021_fast, cai2022stochastic, caiVarianceReducedHalpern2024,diakonikolas2026solving}.
On the other hand, another type of deterministic acceleration mechanism called dual-anchoring \citep{yoonOptimalAccelerationMinimax2024} has been recently shown to be more robust against oracle perturbation and stochastic noise, while enjoying the same optimal deterministic convergence guarantee as Halpern-type methods \citep{yoonTheoryCompositionDuality, yoonDirectAccelerationStochastic2026}.
In particular, \citet{yoonDirectAccelerationStochastic2026} showed that a simple stochastic dual-anchoring algorithm called \algname{S-Dual-OHM} with constant mini-batch size $B$ achieves
\begin{align}
\label{eqn:prior-work-key-bound}
	\expec{\norm{\opF (x_{N - 1})}}^2 \le \expec{\sqnorm{\opF (x_{N - 1})}} \le \frac{4L^2 \sqnorm{x_0 - x_\star}}{N^2} + \frac{6\sigma^2}{B}
\end{align}
for problems with cocoercive $\opF (x; \xi)$, where $\sigma^2$ denotes a bound on the oracle variance.
Hence, taking $N = \cO (\epsilon^{-1})$ and $B = \cO (\epsilon^{-2})$ gives $\cO (\epsilon^{-3})$ oracle complexity for achieving $\expec{\norm{\opF (x)}} \le \epsilon$, which already offers a clean improvement over stochastic anchoring-based approaches, which required additional algorithmic techniques to achieve the same complexity.

On the other hand, the result \eqref{eqn:prior-work-key-bound} was limited to cocoercive problems and did not cover the more general monotone stochastic root-finding problems.
Furthermore, the $\cO (\epsilon^{-3})$ complexity derived by~\eqref{eqn:prior-work-key-bound} still has a significant gap to the $\Omega (\epsilon^{-2})$ lower bound from \cite{chenNearoptimalAlgorithmsMaking2024}.
\emph{In this work, we address both of these issues and develop a conceptually simple, near-optimal algorithm for monotone stochastic problems based on dual-anchoring.}
More precisely, we provide the following technical contributions.

\subsection{Main contributions}

First, we show that \algname{S-Dual-OHM} with optimized scheduling of batch sizes $B_k$ in fact attains an improved, near-optimal complexity of $\widetilde\cO (\epsilon^{-2})$ for stochastic cocoercive root-finding (\cref{thm:fixed-point} \& \cref{cor:fixed-point}).
Interestingly, this is achieved without fundamentally changing the update rule of \algname{S-Dual-OHM} or its convergence analysis, and the algorithm remains single-loop.

Second, we extend the near-optimality of \algname{S-Dual-OHM} with optimized variance control to stochastic monotone root-finding by using subroutines for approximately computing the resolvents.
Assuming that each $\opF (\cdot; \xi)$ is monotone and $L$-Lipschitz, the proposed algorithm has the complexity
\begin{equation}
\label{eq:intro-complexity}
	\cO\! \left( \ell + \frac{L\norm{x_0 - x_\star}}{\epsilon} \ell + \frac{\sigma^2}{\epsilon^2}\ell^2\right) \text{ where } \ell = \log \left( 1 + \frac{L\norm{x_0 - x_\star}}{\epsilon} \right)
\end{equation}
(\cref{cor:weighted}).
This complexity is near-optimal, and improves upon the previous state-of-the-art algorithm \algname{RAIN} \citep{chenNearoptimalAlgorithmsMaking2024} that has cubic poly-logarithmic dependence on $\frac{L\norm{x_0 - x_\star}}{\epsilon}$ in the dominant term, although \algname{RAIN} only requires monotonicity and Lipschitzness of $\opF$.

\subsection{Related work}

\paragraph{Halpern-type acceleration and its stochastic extension.}
Halpern iteration \citep{Halpern1967_fixed} originates in the theory of fixed point problems, which find a point $x_\star$ such that $x_\star = \opT (x_\star)$ for nonexpansive $\opT \colon \reals^d \to \reals^d$; it can accelerate the residual convergence of fixed-point iterations \citep{SabachShtern2017_first, contrerasOptimalErrorBounds2023} and in fact, attain exact optimal complexity matching the lower bound without even a constant factor gap \citep{Lieder2021_convergence, ParkRyu2022_exact}:
\begin{align}
\tag{\algname{OHM}}
\label{eq:OHM}
	x_{k + 1} = \frac{1}{k + 2} x_0 + \frac{k + 1}{k + 2} \opT x_k .
\end{align}
This idea, also called anchoring in the literature, was adapted to develop accelerated algorithms for monotone root-finding and minimax optimization \citep{Diakonikolas2020_halpern, yoonAcceleratedAlgorithmsSmooth2021, LeeKim2021_fast, Tran-DinhLuo2021_halperntype, CaiZheng2023_accelerated}.
In stochastic problems, \citet{cai2022stochastic, caiVarianceReducedHalpern2024} combined Halpern iteration with variance reduction to achieve either $\cO (\epsilon^{-3})$ or improved complexity in certain finite-sum regime.
\citet{diakonikolas2026solving} improved this to $\widetilde\cO(\epsilon^{-2})$ with gradual Halpern method \citep{diakonikolas2025pushing} for samplewise nonexpansive fixed-point problems.
\citet{bravoStochasticHalpernIteration2026} studied stochastic Halpern iteration in more general normed spaces.

\paragraph{Algorithms and complexity for stochastic root-finding problems.}
Using stochastic variants of classical methods such as extragradient \citep{Korpelevich1976_extragradient} or optimistic gradient \citep{popovModificationArrowHurwiczMethod1980, RakhlinSridharan2013_online, RakhlinSridharan2013_optimization} typically yields a suboptimal $\cO (\epsilon^{-4})$ oracle complexity \citep{diakonikolasEfficientMethodsStructured2021, gorbunovStochasticExtragradientGeneral2022, choudhurySinglecallStochasticExtragradient2023}.
Extending the plain anchoring mechanism of \ref{eq:OHM} results in accumulation of errors \citep{LeeKim2021_fast}, which is why variance reduction was used in prior work \citep{cai2022stochastic, caiVarianceReducedHalpern2024} that explored stochastic Halpern-type algorithms.
\citet{chenNearoptimalAlgorithmsMaking2024} proposed the Recursive Anchored Iteration (\algname{RAIN}) algorithm together with a complexity lower bound, which uses amortized scheduling of regularization strength to achieve a near-optimal complexity $\widetilde\cO \left( \frac{LD}{\epsilon} + \frac{\sigma^2}{\epsilon^2} \right)$ for stochastic monotone root-finding problems.
\citet{yoonDirectAccelerationStochastic2026} considered an alternative approach of extending a distinct form of acceleration from deterministic fixed-point problem setting developed by \citet{yoonOptimalAccelerationMinimax2024}:
\begin{align}
\tag{\algname{Dual-OHM}}
\label{eq:Dual-OHM}
	x_{k + 1} = x_k + \frac{N - k - 1}{N - k} \left( \opT (x_k) - \opT (x_{k - 1}) \right)
\end{align}
where the total iteration number $N$ is predetermined.
They showed that \ref{eq:Dual-OHM} is more robust to stochastic noise, and $\cO (\epsilon^{-3})$ complexity can be more easily attained without variance reduction or recursive regularization as in the case of stochastic anchoring methods, albeit for a narrower class of cocoercive stochastic root-finding problems.

\paragraph{Proximal view of root-finding problems.}
Proximal-point methods provide a classical framework for solving monotone inclusions and convex-concave minimax problems using resolvent operations \citep{martinetRegularisationDinequationsVariationnelles1970, rockafellarMonotoneOperatorsProximal1976}.
Resolvents are often considered as \emph{implicit} operations that are generally not computable.
Approximating resolvents using forward/explicit evaluations of $\opF$ is a well-established algorithmic principle in the literature.
Extragradient and optimistic gradient methods admit interpretations as approximations to proximal-point iterations \citep{monteiroComplexityHybridProximal2010, monteiroIterationcomplexityNewtonProximal2012, MokhtariOzdaglarPattathil2020_unified, MokhtariOzdaglarPattathil2020_convergence, jiangGeneralizedOptimisticMethods2025}, This viewpoint was also combined with anchor acceleration to design more efficient algorithms \citep{Kim2021_accelerated, Diakonikolas2020_halpern} or to establish a formal connection between implicit and explicit algorithms \citep{SuhParkRyu2023_continuoustime, yoonAcceleratedMinimaxAlgorithms2025}.
Our algorithmic design combines the idea of approximate resolvent computation with stochastic proximal point methods \citep{bianchiErgodicConvergenceStochastic2016, sadievStochasticProximalPoint2024}.

\subsection{Preliminaries and assumptions}
\label{sec:preliminaries}

We assume throughout the paper that \eqref{eq:problem} has a solution $x_\star$.
We say $\opF$ is \emph{monotone} if $\inprod{\opF (x) - \opF (y)}{x - y} \ge 0$ for all $x, y \in \reals^d$, and $L$-\emph{Lipschitz} if $\norm{\opF (x) - \opF (y)} \le L\norm{x - y}$ for all $x, y \in \reals^d$.
It is $\nicefrac{1}{L}$-\emph{cocoercive} if $\inprod{\opF (x) - \opF (y)}{x - y} \ge \frac1L\sqnorm{\opF (x) - \opF (y)}$ for all $x, y \in \reals^d$.
While cocoercivity implies monotonicity and Lipschitz continuity, the converse need not hold.
We say an operator $\opT \colon \reals^d \to \reals^d$ is \emph{nonexpansive} if $\norm{\opT (x) - \opT (y)} \le \norm{x - y}$ for all $x, y \in \reals^d$.

Denote the identity operator by $\opI$.
For a monotone continuous $\opF$, its \emph{resolvent} $\opJ_{\alpha\opF} = (\opI + \alpha\opF)^{-1}$ is well defined operator on $\reals^d$ and is (firmly) nonexpansive for any $\alpha > 0$, and fixed points of $\opJ_{\alpha\opF}$ are precisely the zeros of $\opF$.
When $\opF$ is $\nicefrac{1}{L}$-cocoercive, $\opI - \alpha\opF$ is nonexpansive for $0 < \alpha \le \frac{2}{L}$ \citep{BauschkeCombettes2017_convex}.

\begin{assumption}[Stochastic oracle]
\label{ass:oracle}
The stochastic oracle operator $\opF (x; \xi)$ is jointly measurable, unbiased and has bounded variance.
That is, there exists $\sigma > 0$ such that for every $x \in \reals^d$,
\[
	\expec{\opF (x; \xi)} = \opF (x), \qquad \expec{\sqnorm{\opF (x; \xi) - \opF (x)}} \le \sigma^2.
\]
Furthermore, for each sampled $\xi$, the stochastic operator $\opF (\cdot; \xi)$ can be evaluated at multiple points.
\end{assumption}

At iteration $k$, we can draw a mini-batch $\cB_k$ containing $B_k$ independent samples, which defines $\opF_{\cB_k} (x) = \frac1{B_k}\sum_{\xi \in \cB_k}\opF (x; \xi)$.
Whenever we use them, mini-batches are independent across iterations, newly drawn samples being independent of all previously drawn ones.

\section{Stochastic cocoercive root-finding and fixed-point problems}
\label{sec:fixed-point}

We first consider stochastic root-finding problems with cocoercive $\opF = \expec{\opF (\cdot; \xi)}$.
This can be recast into stochastic fixed-point problem by taking $\opT (\cdot; \xi) = \opI - \alpha \opF (\cdot; \xi)$ for appropriate $\alpha$, so that $\opT = \expec{\opT (\cdot; \xi)}$ is nonexpansive.
For the analysis, we will take this fixed-point view.
Throughout the section, $x_\star$ will denote a fixed point of $\opT$, or equivalently, a zero of $\opF$.

The main result of this section is to improve the $\cO (\epsilon^{-3})$ complexity bound for \algname{S-Dual-OHM} provided in \cite{yoonDirectAccelerationStochastic2026} by optimizing the variance term contributed by each iteration number $k$.
For the convergence analysis, it is convenient to consider the following abstraction: at iteration $k$, consider using a \emph{random map} $\opT_k$ at each iteration $k$ such that $\expec{\opT_k (u)} = \opT (u)$ and $\expec{\sqnorm{\opT_k (u) - \opT (u)}} \le \sigma_k^2$ for all deterministic $u \in \reals^d$.
In particular, for any $u, v \in \reals^d$ it satisfies
\begin{equation}
\label{eq:fixed-point-interface}
	\expec{\sqnorm{\opT_k (u) - \opT (v)}} = \sqnorm{\opT (u) - \opT (v)} + \expec{\sqnorm{\opT_k (u) - \opT (u)}} \le \sqnorm{u - v} + \sigma_k^2.
\end{equation}
We will additionally require each $\opT_k$ to be square-nonexpansive in expectation.
Together, we state:

\begin{condition}
\label{ass:fixed-point}
The jointly measurable random maps $\opT_k$ are independent across iterations, and for every deterministic $u, v \in \reals^d$, satisfy
\begin{align}
\label{eq:fixed-point-stability}
	\expec{\sqnorm{\opT_k (u) - \opT (v)}} \le \sqnorm{u - v} + \sigma_k^2 , \quad \expec{\sqnorm{\opT_k (u) - \opT_k (v)}} \le \sqnorm{u - v},
\end{align}
where $\sigma_k^2 \ge 0$ is deterministic and the expectation is taken over randomness in $\opT_k$.
\end{condition}

Now fix $N \ge 2$ and consider the following formulation of \algname{S-Dual-OHM}:
\begin{equation}
\tag{\algname{S-Dual-OHM}}
\label{eq:dual}
\begin{aligned}
	x_{k + 1} & = x_k + \frac{N - k - 1}{N - k} \left( \opT_k (x_k) - \opT_{k - 1} (x_{k - 1}) \right)
\end{aligned}
\end{equation}
for $k = 0, \ldots, N - 2$, where $\opT_{-1} (x_{-1}) := x_0$.
Its final output, $x_{N - 1}$, has small fixed-point residual.

\begin{theorem}
\label{thm:fixed-point}
\ref{eq:dual} run with random maps $\opT_0, \dots, \opT_{N - 2}$ satisfying Condition~\ref{ass:fixed-point} and $x_0 \in \reals^d$ such that $\norm{x_0 - x_\star} \le D$ satisfies: with $a_{N, k} = \frac1{(N - k) (N - k - 1)}$,
\begin{equation*}
	\expec{\sqnorm{x_{N - 1} - \opT (x_{N - 1})}} \le \frac{4D^2}{N^2} + 6\sum_{k = 0}^{N - 2}a_{N, k}\sigma_k^2 .
\end{equation*}
\end{theorem}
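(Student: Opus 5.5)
The plan is to mimic the deterministic certificate for \ref{eq:Dual-OHM}: show that $\sqnorm{x_{N-1} - \opT(x_{N-1})} - \frac{4}{N^2}\sqnorm{x_0 - x_\star}$ equals minus a nonnegative combination of nonexpansiveness inequalities plus a nonpositive sum of squares. In the stochastic setting each nonexpansiveness inequality is replaced by its in-expectation version from Condition~\ref{ass:fixed-point}. The first inequality in \eqref{eq:fixed-point-stability} pays an additive $\sigma_k^2$; the second pays nothing. So the final bound picks up $\sigma_k^2$ times the multiplier of every inequality of the first type that involves $\opT_k$.

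\textbf{Step 1: unroll the update.} Write $y_k := \opT_k(x_k)$ with $y_{-1} := x_0$, and $g_k := x_k - y_k$. Telescoping \ref{eq:dual} gives
\[
	x_{k+1} = x_0 + \sum_{j=0}^{k} \frac{N-j-1}{N-j}(y_j - y_{j-1}).
\]
Hence $x_{k+1} - y_k$ is an explicit linear combination of the $y_j - y_{j-1}$. This is the ``dual'' form of the anchoring in \ref{eq:OHM}.

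\textbf{Step 2: write a Lyapunov quantity backwards in $k$.} Define
\[
	V_k = c_k\sqnorm{x_{k+1} - y_k} + (\text{cross terms with } x_k - x_{k-1} \text{ and } y_k - y_{k-1}),
\]
with coefficients $c_k$ proportional to $a_{N,k}^{-1}$ scaled so that $V_{N-2}$ controls the final residual. The intended one-step argument, done pathwise except for the noise, is as follows. Subtract the inequality $\sqnorm{y_k - y_{k-1}} \le \sqnorm{x_k - x_{k-1}}$ with a multiplier $\lambda_k$, and check that the difference $V_{k} - V_{k-1}$ plus this term is a negative square. The inequality is the second part of \eqref{eq:fixed-point-stability} when both points are fed to the same map; when the maps differ it is the first part, at cost $\sigma^2$.

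Both inputs are random, so the condition is applied conditionally on $\mathcal{F}_{k-1}$, the $\sigma$-algebra of $\opT_0,\dots,\opT_{k-1}$. Since $x_k$ and $x_{k-1}$ are $\mathcal{F}_{k-1}$-measurable and $\opT_k$ is independent of $\mathcal{F}_{k-1}$, the deterministic-point statements apply conditionally.

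\textbf{Step 3: handle consecutive maps $\opT_k$ and $\opT_{k-1}$.} Here I insert $\opT(x_k)$ and $\opT(x_{k-1})$. Unbiasedness kills the cross term $\inprod{\opT_k(x_k) - \opT(x_k)}{\cdot}$ against $\mathcal{F}_{k-1}$-measurable vectors. This gives
\[
	\expec{\sqnorm{y_k - y_{k-1}}} \le \expec{\sqnorm{x_k - x_{k-1}}} + \sigma_k^2 + \sigma_{k-1}^2 .
\]
I also need the inner-product inequalities against the anchor $x_\star$, which is a fixed point of $\opT$. These use the first part of \eqref{eq:fixed-point-stability} with $v = x_\star$, giving $\expec{\sqnorm{y_k - x_\star}} \le \expec{\sqnorm{x_k - x_\star}} + \sigma_k^2$.

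\textbf{Step 4: close the chain.} At the last step I relate $x_{N-1} - \opT(x_{N-1})$ to the final potential by one more application of \eqref{eq:fixed-point-interface}. Summing the expected one-step inequalities from $k = 0$ to $N-2$ and dividing by the top weight then gives $\frac{4D^2}{N^2}$ from $V_{-1}$, which involves only $x_0 - x_\star$, plus $\sum_k w_k \sigma_k^2$.

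\textbf{Main obstacle.} The hard part is the bookkeeping of the noise weights $w_k$. Each $\sigma_k^2$ enters through several inequalities: the consecutive pair at step $k$, the consecutive pair at step $k+1$, and the anchor term. After normalization these must sum to at most $6a_{N,k} = \frac{6}{(N-k)(N-k-1)}$. I would first carry out the deterministic certificate with explicit multipliers (known from the \algname{Dual-OHM} analysis in \cite{yoonOptimalAccelerationMinimax2024}). Then I would tally, per $k$, the multipliers of inequalities that involve $\opT_k$ at a point other than through the same-map comparison. I expect each to be $O(1/(N-k)^2)$ relative to the final scaling, with constants summing to $6$. If the tally is off, the fallback is to use Young's inequality in Step 3 to trade a constant factor between the two $\sigma$ terms.
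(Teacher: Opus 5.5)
Your proposal has a genuine gap at two levels: the deterministic certificate you plan to build does not exist, and the stochastic step that would feed it is false.

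\textbf{The certificate.} Your Step~2 aims at the wrong certificate. Consecutive-pair inequalities $\sqnorm{\opT(x_k) - \opT(x_{k-1})} \le \sqnorm{x_k - x_{k-1}}$ together with anchor inequalities at $x_\star$ are the structure of the \algname{OHM} proof. For \algname{Dual-OHM}, every iterate is compared with the \emph{last} one. Set $Q_k = \frac12\bigl(\sqnorm{x_k - x_{N-1}} - \sqnorm{\opT_k(x_k) - \opT(x_{N-1})}\bigr)$. The paper telescopes a potential containing $x_{N-1}$ and $\opT(x_{N-1})$ into a pathwise identity, valid for arbitrary maps:
\[
0 = \frac{N-1}{4}\sqnorm{x_{N-1} - \opT(x_{N-1})} + \frac12\inprod{x_{N-1} - \opT(x_{N-1})}{x_{N-1} - x_0} + \frac N2\sum_{k=0}^{N-2} a_{N,k}Q_k .
\]
It then uses only the single inequality between $x_{N-1}$ and $x_\star$, which gives $\expec{\sqnorm{x_{N-1} - \opT(x_{N-1})}} \le \frac{4D^2}{N^2} - 4\sum_k a_{N,k}\expec{Q_k}$.

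The non-consecutive pairs cannot be avoided. Take $N = 3$ on the real line with $x_\star = 0$ and $x_0 = \frac32$. The bound $\sqnorm{x_2 - \opT(x_2)} \le \frac49 D^2$ is attained both by $\opT(x) = -x$ and by a nonexpansive map acting as $x \mapsto x - 1$ on $x_0, x_1, x_2$. The nonnegative remainder of any valid multiplier certificate must vanish, hence be stationary, at both instances. With only the $(0,1)$, $(1,2)$ and $(k,\star)$ inequalities available, stationarity in $\opT$'s residual at $x_2$ forces $\lambda_{12} = 0$ and $\lambda_{2\star} = 4$. Stationarity in $x_0 - x_\star$ at the shift instance then forces $\lambda_{0\star} + \lambda_{1\star} = \frac43 - 4 < 0$, a contradiction. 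Relatedly, $\sigma_k^2$ does not enter through several inequalities. In the correct proof it enters only through $Q_k$, with weight $4a_{N,k}$.

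\textbf{The stochastic step.} Your Step~3 is false, and the reason is the real crux of the theorem. The noise $\opT_{k-1}(x_{k-1}) - \opT(x_{k-1})$ is correlated with $\opT(x_k)$, because $x_k$ is built from $\opT_{k-1}(x_{k-1})$. Take $\opT = -\opI$ with $x_\star = 0$, and $\opT_k(u) = -u + \xi_k$ with i.i.d.\ centered $\xi_k$ satisfying $\expec{\sqnorm{\xi_k}} = \sigma^2$. These maps satisfy Condition~\ref{ass:fixed-point}, yet for $1 \le k \le N-2$ one gets $\expec{\sqnorm{y_k - y_{k-1}}} = \expec{\sqnorm{x_k - x_{k-1}}} + 2\sigma^2 + \frac{2(N-k)}{N-k+1}\sigma^2$.

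In the correct certificate the same issue appears as the covariance $\expec{\inprod{\opT(x_{N-1})}{\opT_k(x_k) - \opS_k(x_k)}}$, where $\opS_k = \expec{\opT_k}$. Since $x_{N-1}$ depends on $\opT_k$, the condition cannot be applied at $(x_k, x_{N-1})$ directly. The paper resolves this with a leave-one-out coupling:
\begin{enumerate}
\item Replace $\opT_k$ by an independent copy, which yields a trajectory $x^{(k)}$.
\item Use the convex-combination form of the iterates and the \emph{second} inequality in \eqref{eq:fixed-point-stability}. Here it is applied to the shared maps $\opT_t$, $t > k$, evaluated on two trajectories independent of them.
\item An exact telescoping in the $a_{N,t}$ then gives $\expec{\sqnorm{x_{N-1} - x^{(k)}_{N-1}}} \le \nu_k^2 := \expec{\sqnorm{\opT_k(x_k) - \opS_k(x_k)}} \le \sigma_k^2$.
\item Cauchy--Schwarz bounds the covariance by $\nu_k^2$.
\end{enumerate}
Adding the $\frac12\sigma_k^2$ coming from the first inequality gives $\expec{Q_k} \ge -\frac32\sigma_k^2$, which is where the constant $6 = 4\cdot\frac32$ comes from.

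Your plan has no counterpart of this step. The second part of the condition is never genuinely used in it, since you never evaluate one random map at two points. Your Young's-inequality fallback does not help either: the obstruction is correlation, not how a constant is split between $\sigma_k^2$ and $\sigma_{k-1}^2$.
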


Observe that $\sum_k a_{N, k} = 1 - \frac{1}{N}$, so if each $\opT_k$ is a mini-batch operator with a constant batch-size $B$, then we can take $\sigma_k^2 \equiv \frac{\alpha^2 \sigma^2}{B}$ and the summation of variance terms is bounded by $\frac{6\alpha^2 \sigma^2}{B}$, independently of $N$.
This case precisely recovers \cite[Theorem~4.1]{yoonDirectAccelerationStochastic2026}.
We note that while \cref{thm:fixed-point} is more general than \cite[Theorem~4.1]{yoonDirectAccelerationStochastic2026}, their proof essentially provides all ideas necessary to prove this result.

Our new insight here is that \emph{the noise terms from earlier iterations are much less costly than the later ones} because they are weighted by $a_{N, k} = \frac1{(N - k) (N - k - 1)}$.
Therefore, we may allow larger $\sigma_k$ for early $k$ and enforce smaller $\sigma_k$ later.
For the special case with mini-batching, this means that we can allocate smaller batches to the earlier iterations without affecting the total noise term much.
Optimizing this batch schedule yields a significant oracle complexity improvement, as detailed below.

\subsection{Mini-batching and optimized allocation of the oracle budget}
\label{subsec:cocoercive-batch-allocation}

Consider the case where $\opT (\cdot; \xi)$ is an unbiased stochastic oracle such that $\expec{\opT (x; \xi)} = \opT (x)$, $\expec{\sqnorm{\opT (x; \xi) - \opT (x)}} \le \tau^2$ and $\expec{\sqnorm{\opT (x; \xi) - \opT (y; \xi)}} \le \sqnorm{x - y}$ for any deterministic $x, y \in \reals^d$, and we use the mini-batch operator
\begin{equation}
\label{eq:fixed-point-batch}
	\opT_k (x) = \frac1{B_k}\sum_{\xi \in \cB_k} \opT (x; \xi)
\end{equation}
where each $\cB_k$ is a uniformly and independently sampled mini-batch of size $B_k$.
In this case, Condition~\ref{ass:fixed-point} is satisfied with $\sigma_k^2 = \frac{\tau^2}{B_k}$, and one evaluation of $\opT_k$ costs $B_k$ oracle evaluations.
The total noise term given by \cref{thm:fixed-point} is therefore proportional to $A_N := \sum_{k = 0}^{N - 2} \frac{a_{N, k}}{B_k}$.
Now treating batch-sizes as real numbers, and assuming the total oracle access (complexity) constraint $\sum_{k = 0}^{N - 2} B_k \le Q$, we can optimize $B_k$ to minimize the noise term, using Cauchy--Schwarz:
\begin{equation}
\label{eq:allocation}
	\left(\sum_{k = 0}^{N - 2} \sqrt{a_{N, k}}\right)^2 \le \left(\sum_{k = 0}^{N - 2} \frac{a_{N, k}}{B_k}\right) \left(\sum_{k = 0}^{N - 2} B_k\right).
\end{equation}
Equality holds when $B_k$ is proportional to $\sqrt{a_{N, k}}$.
Hence $B_k = \frac{Q\sqrt{a_{N, k}}}{S_N}$, where $S_N = \sum_{k = 0}^{N - 2} \sqrt{a_{N, k}}$, is the optimal allocation satisfying $A_N = \frac{S_N^2}{Q}$.
Now rounding up each $B_k$ yields
\begin{equation}
\label{eq:optimal-cocoercive}
	B_k = \left\lceil\frac{Q\sqrt{a_{N, k}}}{S_N}\right\rceil, \qquad \sum_{k = 0}^{N - 2} B_k \le Q + N - 1, \qquad \sum_{k = 0}^{N - 2} \frac{a_{N, k}}{B_k} \le \frac{S_N^2}{Q}.
\end{equation}
Note that $S_N \le \sum_{k = 0}^{N - 2} \frac{1}{N - k - 1} \le 1 + \log N$.

\begin{corollary}
\label{cor:fixed-point}
Let $\epsilon > 0$.
\ref{eq:dual} run with the mini-batch operators~\eqref{eq:fixed-point-batch}, with
\[
	N = \max\left\{2, \left\lceil \frac{4D}{\epsilon} \right\rceil\right\}, \quad 
	Q = \max\left\{1, \frac{8\tau^2S_N^2}{\epsilon^2} \right\}
\]
and the batch schedule \eqref{eq:optimal-cocoercive} attains $\expec{\sqnorm{x_{N - 1} - \opT (x_{N - 1})}} \le \epsilon^2$ using
\begin{equation}
\label{eq:fixed-point-complexity}
	\cO\! \left( \frac{D}{\epsilon} + \frac{\tau^2}{\epsilon^2} \left( 1 + \log \left( 1 + \frac{D}{\epsilon}\right) \right)^2 \right)
\end{equation}
oracle evaluations.
\end{corollary}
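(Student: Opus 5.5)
The plan is to combine \cref{thm:fixed-point} with the allocation bounds \eqref{eq:optimal-cocoercive}, checking separately that the bias term and the noise term are each at most $\epsilon^2/2$. Then the oracle count follows from $\sum_k B_k \le Q + N - 1$ together with the bound on $S_N$.

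First, Condition~\ref{ass:fixed-point} holds for the mini-batch operators \eqref{eq:fixed-point-batch} with $\sigma_k^2 = \tau^2/B_k$. Independence across $k$ comes from fresh mini-batches. Unbiasedness and the variance computation for an average of $B_k$ i.i.d.\ samples give the first inequality in \eqref{eq:fixed-point-stability}, via \eqref{eq:fixed-point-interface}. For the second inequality, Jensen (convexity of $\sqnorm{\cdot}$) gives
\[
\sqnorm{\opT_k(u)-\opT_k(v)} \le \frac1{B_k}\sum_{\xi\in\cB_k}\sqnorm{\opT(u;\xi)-\opT(v;\xi)},
\]
and taking expectations bounds this by $\sqnorm{u-v}$. \cref{thm:fixed-point} therefore yields
\[
\expec{\sqnorm{x_{N-1}-\opT(x_{N-1})}} \le \frac{4D^2}{N^2} + 6\tau^2 A_N .
\]

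Next I bound the two terms. Since $N \ge 4D/\epsilon$, we have $4D^2/N^2 \le \epsilon^2/4$. By \eqref{eq:optimal-cocoercive}, $A_N \le S_N^2/Q$. Because $Q \ge 8\tau^2 S_N^2/\epsilon^2$, this gives $6\tau^2 A_N \le 6\epsilon^2/8 = 3\epsilon^2/4$. The total is at most $\epsilon^2$.

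To justify \eqref{eq:optimal-cocoercive}, note that $B_k \ge Q\sqrt{a_{N,k}}/S_N$. Hence $a_{N,k}/B_k \le S_N\sqrt{a_{N,k}}/Q$, and summing over $k$ gives $S_N^2/Q$. Rounding up adds at most $1$ per index, which gives $\sum_k B_k \le Q + N - 1$. For $S_N$, use $\sqrt{a_{N,k}} \le 1/(N-k-1)$ and the harmonic sum, so $S_N \le 1 + \log N$. If $\tau = 0$ then $Q = 1$; the batches are still well-defined and the bound holds trivially.

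For the complexity, the oracle count is
\[
\sum_k B_k \le Q + N \le 1 + \frac{8\tau^2(1+\log N)^2}{\epsilon^2} + \max\left\{2, \frac{4D}{\epsilon} + 1\right\}.
\]
Because $N \le 4D/\epsilon + 2$, we get $\log N \le \log(2 + 4D/\epsilon) = \cO\big(1 + \log(1 + D/\epsilon)\big)$. This gives \eqref{eq:fixed-point-complexity}, with the additive constants absorbed. The only mildly delicate step is this last conversion of $\log N$ into the form $\log(1 + D/\epsilon)$, and it is routine.
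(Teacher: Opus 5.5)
Your proposal is correct and follows the paper's own proof. You substitute the allocation bounds \eqref{eq:optimal-cocoercive} into \cref{thm:fixed-point}, split $\epsilon^2$ as $\epsilon^2/4 + 3\epsilon^2/4$, and bound the cost by $\sum_k B_k \le Q + N - 1$ using $S_N \le 1 + \log N$ and $N \le 2 + \lceil 4D/\epsilon \rceil$. Your verification of Condition~\ref{ass:fixed-point} and of \eqref{eq:optimal-cocoercive} adds detail the paper only states before the corollary; the one slip is cosmetic: your opening plan says each term is at most $\epsilon^2/2$, but the noise term is bounded by $3\epsilon^2/4$, as your own computation correctly shows.
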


\begin{proof}
By \cref{thm:fixed-point} and \eqref{eq:optimal-cocoercive}, we have
\begin{align*}
	\expec{\sqnorm{x_{N - 1} - \opT (x_{N - 1})}} \le \frac{4D^2}{N^2} + 6\tau^2 \sum_{k = 0}^{N - 2} \frac{a_{N, k}}{B_k} \le \frac{4D^2}{N^2} + \frac{6\tau^2 S_N^2}{Q} \le \frac{\epsilon^2}{4} + \frac{3\epsilon^2}{4} = \epsilon^2 .
\end{align*}
Because $N \le 2 + \left\lceil\frac{4D}{\epsilon}\right\rceil$ and $Q \le 1 + \frac{8\tau^2 S_N^2}{\epsilon^2} \le 1 + \frac{8\tau^2 (1 + \log N)^2}{\epsilon^2}$, the total complexity is at most
\begin{align*}
	\sum_{k = 0}^{N - 2} B_k \le Q + N - 1 \le 2 + \left\lceil\frac{4D}{\epsilon}\right\rceil + \frac{8\tau^2}{\epsilon^2} \left( 1 + \log \left( 2 + \left\lceil\frac{4D}{\epsilon}\right\rceil \right) \right)^2 .
\end{align*}

\end{proof}

\subsection{Specialization to stochastic cocoercive root-finding}

Consider the operator oracle of Assumption~\ref{ass:oracle}.
Suppose additionally that $\opF (\cdot; \xi)$ is cocoercive in expectation, i.e., for every $u, v \in \reals^d$,
\begin{equation}
\label{eq:cocoercivity-in-expectation}
	\expec{\sqnorm{\opF (u; \xi) - \opF (v; \xi)}} \le L\inprod{\opF (u) - \opF (v)}{u - v}.
\end{equation}
For $0 < \alpha \le \nicefrac{2}{L}$, taking $\opT = \opI - \alpha\opF$, $\opT (\cdot; \xi) = \opI - \alpha\opF (\cdot; \xi)$ and $\opT_k = \opI - \alpha \opF_{\cB_k}$ satisfies Condition~\ref{ass:fixed-point} with $\tau^2 = \alpha^2 \sigma^2$ and $\sigma_k^2 = \frac{\alpha^2 \sigma^2}{B_k}$, and Theorem~\ref{thm:fixed-point} gives
\begin{equation*}
	\expec{\sqnorm{\opF (x_{N - 1})}} \le \frac{4D^2}{\alpha^2N^2} + 6\sigma^2\sum_{k = 0}^{N - 2}\frac{a_{N, k}}{B_k}.
\end{equation*}
With batch-sizes~\eqref{eq:optimal-cocoercive} and $\alpha = \frac{1}{L}$, Corollary~\ref{cor:fixed-point} with $\alpha\epsilon$ in place of $\epsilon$ yields $\expec{\sqnorm{\opF (x_{N - 1})}} \le \epsilon^2$ in
\begin{equation*}
	\cO\! \left(\frac{LD}{\epsilon} + \frac{\sigma^2}{\epsilon^2}\log^2 \left(\frac{LD}{\epsilon}\right) \right)
\end{equation*}
operator evaluations.

\section{Stochastic monotone root-finding}
\label{sec:monotone}

Now we extend the result of the previous section to stochastic root-finding for monotone and Lipschitz operators.
Recall that the resolvent $\opT = \opJ_{\alpha\opF}$ of a maximally monotone operator $\opF$ is nonexpansive; we leverage this connection to again take the fixed-point view as in \cref{sec:fixed-point}.
Because resolvents are not \emph{exactly} computable, we run inner loops to compute them \emph{approximately}.
These will yield random maps $\opT_k$, which may no longer be unbiased estimators of $\opT$ satisfying Condition~\ref{ass:fixed-point}.
We instead require the following weaker condition.
Later, we show how to select concrete $\opT_k$ satisfying it.

\begin{condition}
\label{ass:monotone-Tk}
The jointly measurable random maps $\opT_k$ are independent across iterations, and for every deterministic $u, v \in \reals^d$, satisfy
\begin{align}
	\expec{\sqnorm{\opT_k (u) - \opT (v)}} & \le \sqnorm{u - v} + \rho_k \left( \sqnorm{u - x_\star} + \sqnorm{v - x_\star} \right) + \sigma_k^2 \label{eq:monotone-bias} \\
	\expec{\sqnorm{\opT_k (u) - \opT_k (v)}} & \le \sqnorm{u - v} + \rho_k \left( \sqnorm{u - x_\star} + \sqnorm{v - x_\star} + \sigma_k^2 \right) \label{eq:monotone-stability}
\end{align}
where $\sigma_k^2, \rho_k \ge 0$ are deterministic and the expectation is taken over randomness in $\opT_k$.
\end{condition}

\subsection{Outer bound under generalized condition on \texorpdfstring{$\opT_k$}{Tk}}
\label{subsec:monotone-outer}

Now we present an analogue of \cref{thm:fixed-point} under the relaxed Condition~\ref{ass:monotone-Tk}.
This result exactly recovers \cref{thm:fixed-point} as a special case with $\rho_k \equiv 0$, so the proof of \cref{thm:outer-fixed-point}, which we present in Appendix~\ref{app:fixed-point}, accounts for the results from the previous section.

\begin{theorem}
\label{thm:outer-fixed-point}
\ref{eq:dual} run with random maps $\opT_0, \dots, \opT_{N - 2}$ satisfying Condition~\ref{ass:monotone-Tk} and $x_0 \in \reals^d$ such that $\norm{x_0 - x_\star} \le D$ satisfies: with $a_{N, k} = \frac1{(N - k) (N - k - 1)}$ and $R = \sum_{k = 0}^{N - 2} \rho_k$,
\begin{equation*}
	\expec{\sqnorm{x_{N - 1} - \opT (x_{N - 1})}} \le \frac{4D^2}{N^2} + 6\sum_{k = 0}^{N - 2}a_{N, k}\sigma_k^2 + 15 Re^R \left(D^2 + \sum_{k = 0}^{N - 2} \sigma_k^2 \right) .
\end{equation*}
\end{theorem}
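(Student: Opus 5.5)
We will follow the Lyapunov/potential analysis of \algname{Dual-OHM} from \cite{yoonOptimalAccelerationMinimax2024, yoonDirectAccelerationStochastic2026}, carried out in expectation, and treat the extra $\rho_k$ terms as perturbations controlled by a Gr\"onwall-type argument. Write $y_k = \opT_k(x_k)$, with $y_{-1} = x_0$. Unrolling \ref{eq:dual} expresses $x_{k}$ as $x_0$ plus a weighted sum of differences $y_j - y_{j-1}$, and the dual-anchoring structure gives the identity
\[
x_{N-1} - y_{N-2} = \sum_{j} c_{N,j}\,(y_j - y_{j-1} - (x_j - x_{j-1}))
\]
with coefficients tied to $\frac{N-k-1}{N-k}$. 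Equivalently, there is a potential $V_k$ that combines $\sqnorm{y_k - y_{k-1}} - \sqnorm{x_k - x_{k-1}}$ and inner products with $x_0 - x_\star$, and it satisfies a telescoping inequality. In the exact case, nonexpansiveness gives $\sqnorm{\opT x_{k}-\opT x_{k-1}} \le \sqnorm{x_k-x_{k-1}}$. Summing these inequalities with weights derived from $a_{N,k}$ gives the $4D^2/N^2$ term.

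First, we redo that proof with $\opT_k$ in place of $\opT$. Because $\opT_k$ and $\opT_{k-1}$ are independent and $x_k$ is measurable with respect to the past, we can condition and apply \eqref{eq:monotone-bias} and \eqref{eq:monotone-stability} with deterministic arguments (by the tower property) wherever the exact proof used nonexpansiveness or the comparison with $\opT(x_\star)=x_\star$. Each step then contributes two kinds of error. The first is $\sigma_k^2$, weighted by $a_{N,k}$, exactly as in \cref{thm:fixed-point}; this produces $6\sum a_{N,k}\sigma_k^2$. The second is $\rho_k(\sqnorm{x_k-x_\star}+\sqnorm{x_{k-1}-x_\star}+\sigma_k^2)$. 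To keep the weights from blowing up, we bound these crudely using $a_{N,k}\le 1$, or the analogous coefficient bound. The result is
\[
\expec{\sqnorm{x_{N-1}-\opT x_{N-1}}} \le \tfrac{4D^2}{N^2} + 6\textstyle\sum a_{N,k}\sigma_k^2 + C\sum_k \rho_k\,(M_k+\sigma_k^2),
\]
where $M_k = \max_{j\le k}\expec{\sqnorm{x_j-x_\star}}$.

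Second, we need an a priori bound on $M_k$. Running the same perturbed argument with a distance-type Lyapunov function (the iterates of \algname{Dual-OHM} stay in a ball of radius $O(D)$ around $x_\star$ in the exact case) should give the recursion
\[
M_{k+1} \le (1+c\rho_k)\,M_k + c'(D^2 + \sigma_k^2 + \rho_k\sigma_k^2).
\]
A discrete Gr\"onwall argument, using $\prod(1+\rho_k)\le e^R$, then yields $M_k \le C e^{R}(D^2+\sum_j\sigma_j^2)$. Substituting this into the previous display gives the term $15Re^R(D^2+\sum\sigma_k^2)$ after tracking constants.

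The main obstacle is the second step: obtaining a clean bound on $\expec{\sqnorm{x_k-x_\star}}$ for \ref{eq:dual}. The iterate is not a simple averaged map, so the bound must come from the potential itself, with the stability condition \eqref{eq:monotone-stability} applied to consecutive iterates. One must also check that the dual-anchor coefficients do not amplify the $\rho$ perturbations beyond a constant. If that fails, we would first try bounding $\norm{x_k-x_\star}$ via $\norm{y_{k-1}-x_\star}$ and the telescoped increments, and then apply Gr\"onwall to that combined quantity.
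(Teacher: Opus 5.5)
There is a genuine gap in your first step. The potential you describe, built from consecutive differences $\sqnorm{y_k - y_{k-1}} - \sqnorm{x_k - x_{k-1}}$, resembles the anchoring (\algname{OHM}) analysis rather than the one for \ref{eq:dual}. The dual-anchor identity (\cref{lem:app-identity}) pairs every iterate with the \emph{terminal} iterate. With $Q_k = \frac12 \bigl(\sqnorm{x_k - x_{N-1}} - \sqnorm{\opT_k(x_k) - \opT(x_{N-1})}\bigr)$ it gives $\expec{\sqnorm{x_{N-1} - \opT(x_{N-1})}} \le \frac{4D^2}{N^2} - 4\sum_{k} a_{N,k}\expec{Q_k}$. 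But $x_{N-1}$ is built from $\opT_k(x_k)$ and all later maps, so it is not independent of $\opT_k$. Your plan to ``condition and apply \eqref{eq:monotone-bias} with deterministic arguments by the tower property'' therefore breaks down exactly at the pair $(u,v) = (x_k, x_{N-1})$. This correlation is the whole difficulty, already for $\rho_k \equiv 0$. You cannot import the term $6\sum_k a_{N,k}\sigma_k^2$ from \cref{thm:fixed-point}, because the proof of that theorem is this very argument.

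The missing idea is as follows. Let $\opS_k = \expec{\opT_k}$ and split $\opT_k(x_k) = \opS_k(x_k) + (\opT_k(x_k) - \opS_k(x_k))$. Subtracting the variance from \eqref{eq:monotone-bias} gives a deterministic inequality for $\sqnorm{\opS_k(u) - \opT(v)}$. This inequality does hold pathwise at $(x_k, x_{N-1})$, and it makes the variance terms cancel. What remains is the covariance $\expec{\inprod{\opT(x_{N-1})}{\opT_k(x_k) - \opS_k(x_k)}}$. To bound it, rerun the method with $\opT_k$ replaced by an independent copy $\opT_k'$, producing iterates $x_j^{(k)}$. The covariance then equals $\expec{\inprod{\opT(x_{N-1}) - \opT(x_{N-1}^{(k)})}{\opT_k(x_k) - \opS_k(x_k)}}$. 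One then needs the stability estimate $\expec{\sqnorm{x_{N-1} - x_{N-1}^{(k)}}} \le \nu_k^2 + \frac32 RM^2$, where $\nu_k^2 = \expec{\sqnorm{\opT_k(x_k) - \opS_k(x_k)}}$ and $M^2 = e^R(D^2 + \sum_t \sigma_t^2)$. This estimate is proved by induction on the convex-combination form \eqref{eq:app-positive}. The induction applies \eqref{eq:monotone-stability} to the \emph{shared} maps $\opT_t$, $t > k$, and uses that the accumulated coefficient collapses to $1 + a_{N,s}(N-j)(N-j-1)$, which equals $1$ at $j = N-1$. That is the actual role of \eqref{eq:monotone-stability}. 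It is also the origin of the constant $6$, through $\expec{Q_k} \ge -\frac12\sigma_k^2 - \nu_k^2 - \cdots$. Finally, it is why the dual-anchor coefficients do not amplify the $\rho$-perturbations; you flag this concern but supply no mechanism for it.

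Your second step also does not close as written. A recursion $M_{k+1} \le (1 + c\rho_k)M_k + c'(D^2 + \sigma_k^2 + \cdots)$ adds $D^2$ at every step, so it yields a bound growing linearly in $k$, not $e^R(D^2 + \sum_k \sigma_k^2)$. The clean route is close to your fallback. By \eqref{eq:app-positive}, $x_j$ is a convex combination of $x_0$ and $\opT_t(x_t)$, $t < j$. Jensen's inequality and \eqref{eq:monotone-bias} with $v = x_\star$ then give $\expec{\sqnorm{x_j - x_\star}} \le \prod_{t<j}(1+\rho_t)\bigl(D^2 + \sum_{t<j}\sigma_t^2\bigr)$ by induction.
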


In particular, when $\opT = \opJ_{\alpha\opF}$, we obtain the following bound on the operator norm.

\begin{corollary}
\label{cor:outer-monotone}
Let $\opF \colon \reals^d \to \reals^d$ be monotone and $L$-Lipschitz, and $\opT = \opJ_{\alpha\opF}$ for some $\alpha \in \left(0, \frac{1}{L}\right]$.
Fix $N \ge 2$, $x_0 \in \reals^d$ and let $D = \norm{x_0 - x_\star}$.
Let \ref{eq:dual} be run with random maps $\opT_0, \dots, \opT_{N - 1}$ satisfying Condition~\ref{ass:monotone-Tk}, and set $y_N = \opT_{N - 1} (x_{N - 1})$.
Then, with $R = \sum_{k = 0}^{N - 1}\rho_k$,
\begin{equation}
\label{eq:abstract-outer-bound}
	\expec{\sqnorm{\opF (y_N)}} \le \frac{8D^2}{\alpha^2N^2} + \frac{24}{\alpha^2}\sum_{k = 0}^{N - 1}\frac{\sigma_k^2}{(N - k)^2} + \frac{30}{\alpha^2} Re^R \left(D^2 + \sum_{k = 0}^{N - 1} \sigma_k^2 \right) .
\end{equation}
\end{corollary}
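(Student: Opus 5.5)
The plan is to reduce the operator-norm bound to the fixed-point residual bound of \cref{thm:outer-fixed-point}, which will be applied to the first $N - 1$ maps $\opT_0, \dots, \opT_{N - 2}$. The extra last map $\opT_{N - 1}$ is then handled separately.

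\textbf{Step 1: reduce $\opF (y_N)$ to two residual-type terms.} Write $x = x_{N - 1}$ and $z = \opJ_{\alpha\opF} (x) = \opT (x)$. By the resolvent identity $\opF (z) = \frac{1}{\alpha} (x - z)$, and $\opF$ is $L$-Lipschitz with $L \le \frac{1}{\alpha}$, so
\[
\norm{\opF (y_N)} \le \norm{\opF (z)} + L\norm{y_N - z} \le \frac{1}{\alpha}\norm{x - \opT (x)} + \frac{1}{\alpha}\norm{\opT_{N - 1} (x) - \opT (x)} .
\]
Squaring and using $(a + b)^2 \le 2a^2 + 2b^2$ gives
\[
\expec{\sqnorm{\opF (y_N)}} \le \frac{2}{\alpha^2}\expec{\sqnorm{x_{N - 1} - \opT (x_{N - 1})}} + \frac{2}{\alpha^2}\expec{\sqnorm{\opT_{N - 1} (x_{N - 1}) - \opT (x_{N - 1})}} .
\]

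\textbf{Step 2: bound the first term.} Apply \cref{thm:outer-fixed-point}; its $R$ is at most the $R$ of the corollary. This gives $\frac{8D^2}{\alpha^2 N^2}$ for the first piece. For the variance sum, use $a_{N, k} = \frac{1}{(N - k)(N - k - 1)} \le \frac{2}{(N - k)^2}$ for $N - k \ge 2$, which turns $\frac{2}{\alpha^2}\cdot 6\sum_k a_{N, k}\sigma_k^2$ into at most $\frac{24}{\alpha^2}\sum_{k \le N - 2}\frac{\sigma_k^2}{(N - k)^2}$. The bias piece contributes $\frac{30}{\alpha^2} R e^R (D^2 + \sum_k \sigma_k^2)$ with the partial $R$.

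\textbf{Step 3: bound the second term.} Since $\opT_{N - 1}$ is independent of $x_{N - 1}$, condition on $x_{N - 1}$ and apply \eqref{eq:monotone-bias} with $u = v = x_{N - 1}$. This gives
\[
\expec{\sqnorm{\opT_{N - 1} (x_{N - 1}) - \opT (x_{N - 1})}} \le 2\rho_{N - 1}\expec{\sqnorm{x_{N - 1} - x_\star}} + \sigma_{N - 1}^2 .
\]
The term $\frac{2}{\alpha^2}\sigma_{N - 1}^2$ is absorbed into the $k = N - 1$ summand $\frac{24}{\alpha^2}\sigma_{N - 1}^2$, since $(N - k)^2 = 1$ there.

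\textbf{Step 4 (main obstacle): control the iterate distance.} It remains to bound $\expec{\sqnorm{x_{N - 1} - x_\star}}$, which the statement of \cref{thm:outer-fixed-point} does not provide. I would extract it from that theorem's proof in Appendix~\ref{app:fixed-point}, in the form
\[
\expec{\sqnorm{x_k - x_\star}} \le C e^{R}\Bigl(D^2 + \sum_j \sigma_j^2\Bigr).
\]
Such a bound should come from a Grönwall-type recursion on the potential, since the $\rho_k$-perturbations enter multiplicatively as $\prod_k (1 + c\rho_k) \le e^{cR}$.

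The terms must then combine within the stated constant. The bias parts total
\[
\frac{30}{\alpha^2} R' e^{R'}\Bigl(D^2 + \sum_k \sigma_k^2\Bigr) + \frac{4C}{\alpha^2}\rho_{N - 1} e^{R}\Bigl(D^2 + \sum_k \sigma_k^2\Bigr), \qquad R' = \sum_{k = 0}^{N - 2}\rho_k .
\]
Because $R' + \rho_{N - 1} = R$ and $e^{R'} \le e^R$, this is at most $\frac{30}{\alpha^2} R e^R (D^2 + \sum_k \sigma_k^2)$ provided $4C \le 30$, i.e.\ $C \le 7.5$. I expect the appendix to give such a constant, as the factor $15$ in \cref{thm:outer-fixed-point} presumably arises from it. If $C$ turns out larger, I would instead redo the last outer step directly using \eqref{eq:monotone-bias} rather than splitting through $z$.
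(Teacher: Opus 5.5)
Your proposal is correct and follows the paper's proof essentially step for step. It uses the same split $\sqnorm{\opF(y_N)} \le \frac{2}{\alpha^2}\sqnorm{x_{N-1}-\opT(x_{N-1})} + 2L^2\sqnorm{y_N-\opT(x_{N-1})}$, the same application of \cref{thm:outer-fixed-point} to $\opT_0,\dots,\opT_{N-2}$ with $a_{N,k}\le 2/(N-k)^2$, and the same use of \eqref{eq:monotone-bias} at $u=v=x_{N-1}$ for the last map. The moment bound you need in Step~4 is exactly \eqref{eq:iterate-moment-bound}, and it holds with $C=1$, so your requirement $4C\le 30$ is met with room to spare. It comes from the convex-combination form \eqref{eq:app-positive} of $x_j$, the estimate $\expec{\sqnorm{\opT_k(u)-x_\star}}\le(1+\rho_k)\sqnorm{u-x_\star}+\sigma_k^2$, and induction, not from a potential-based recursion.
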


\subsection{Inner loop for stable approximate resolvents \texorpdfstring{$\opT_k$}{Tk}}

Our choice of inner loop algorithm, i.e., the construction of $\opT_k$, will require the following samplewise monotonicity and Lipschitzness in addition to Assumption~\ref{ass:oracle}.
Note that it implies the monotonicity and $L$-Lipschitzness of the mean operator $\opF$, but not vice versa.

\begin{assumption}
\label{ass:samplewise}
$\opF (\cdot; \xi)$ is almost surely monotone and $L$-Lipschitz on $\reals^d$, with a common $L > 0$.
\end{assumption}

In \cref{sec:fixed-point}, we achieved reduced variance $\sigma_k^2$ by allocating more sample budgets to iteration $k$, via mini-batching.
Here we aim to choose $\opT_k$ similarly, ensuring Condition~\ref{ass:monotone-Tk} with smaller $\sigma_k^2$ given a larger number of subroutine (inner loop) iterations.
First, suppose ideally that we can compute stochastic proximal operations exactly.
Given sample budget $B_k \ge 1$, first consider $\widehat\opT_k$ defined by:
\begin{equation}
\label{eq:streaming-ideal}
\begin{aligned}
\begin{cases}
	z_0 = u \\
	z_{j + 1} = \opJ_{\alpha\gamma_j\opF (\cdot; \xi_j)} \bigl( (1 - \gamma_j) z_j + \gamma_ju\bigr) \text{ for } j = 0, \dots, B_k - 1 \\
	\widehat\opT_k (u) = z_{B_k}
\end{cases}
\end{aligned}
\end{equation}
where we use $\alpha = \frac{1}{2L}$, $\gamma_j = \frac{2}{j + 2}$ and each $\xi_j$ is a fresh sample drawn independently of all preceding ones.
Following the technical treatment of stochastic proximal point methods similar to \cite{sadievStochasticProximalPoint2024}, we can show that $\widehat\opT_k$ is almost surely nonexpansive, and additionally satisfies
\begin{equation}
\label{eq:streaming-comparison}
	\expec{\sqnorm{\widehat\opT_k (u) - \opT (v)}} \le \sqnorm{u - v} + \frac{4\alpha^2\sigma^2}{B_k + 1}
\end{equation}
for any $u, v \in \reals^d$, as we show in Appendix~\ref{app:monotone}.
Hence $\widehat\opT_k$ satisfies Condition~\ref{ass:fixed-point} with $\sigma_k^2 = \frac{4\alpha^2\sigma^2}{B_k + 1}$.

However, because $\opJ_{\alpha\gamma_j\opF (\cdot; \xi_j)} (p_j)$ is generally not directly computable, we can only compute it up to errors, via forward evaluations $\opF (\cdot; \xi_j)$.
Nevertheless, this is not a significant overhead because it is equivalent to finding the zero of a $1$-strongly monotone operator $z \mapsto \opI + \alpha \gamma_j \opF (z; \xi_j) - p_j$, which converges rapidly with simple operator descent using step-size $1$.
Algorithm~\ref{alg:resolvent} presents an approximate version of~\eqref{eq:streaming-ideal} replacing the resolvent step with this descent loop.

\begin{algorithm}[t]
\caption{Approximate resolvent $\opT_k$}
\label{alg:resolvent}
\begin{algorithmic}[1]
	\Require Input $u$, Sample budget $B_k \ge 1$, Tolerance $\rho_k \in (0, 1]$, Lipschitz constant $L$, Hyperparameters $\alpha, \gamma_j$
	\State $w_0 \gets u$,
	\For{$j = 0, \ldots, B_k - 1$}
		\State Draw $\xi_j$
		\State $p_j \gets (1 - \gamma_j) w_j + \gamma_j u$
		\State $q_0 \gets p_j$, $m_{k, j} \gets \left\lceil \frac{\log (16 / \rho_k)}{\log (j + 2)} \right\rceil$
		\For{$r = 0, \ldots, m_{k, j}$}
			\State $q_{r + 1} \gets p_j - \alpha\gamma_j\opF (q_r; \xi_j)$
		\EndFor
		\State $w_{j + 1} \gets q_{m_{k, j} + 1}$
	\EndFor
	\State $\opT_k (u) \gets w_{B_k}$
	\State \Return $\opT_k (u)$
\end{algorithmic}
\end{algorithm}

\begin{lemma}
\label{lem:actual-resolvent}
Under Assumptions~\ref{ass:oracle} and~\ref{ass:samplewise}, the random map $\opT_k$ defined via \cref{alg:resolvent} with $\alpha = \frac{1}{2L}$ and $\gamma_j = \frac{2}{j + 2}$ satisfies Condition~\ref{ass:monotone-Tk} with $\sigma_k^2 = \frac{6\alpha^2\sigma^2}{B_k + 1}$ and uses $\cO \left(B_k \left[1 + \frac{\log (16 / \rho_k)}{\log (B_k + 1)}\right] \right)$ oracle evaluations.
\end{lemma}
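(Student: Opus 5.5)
We compare the actual map $\opT_k$ of \cref{alg:resolvent} with the idealized map $\widehat\opT_k$ of \eqref{eq:streaming-ideal}, coupling the two through the same samples $\xi_0,\dots,\xi_{B_k-1}$. We take as given the two ideal properties stated before the lemma (and proved in Appendix~\ref{app:monotone}): $\widehat\opT_k$ is almost surely nonexpansive, and it satisfies \eqref{eq:streaming-comparison}. It then remains to bound the deviation $\norm{\opT_k(u) - \widehat\opT_k(u)}$ by a quantity of order $\sqrt{\rho_k}\,$ times a term controlled by $\norm{u - x_\star}$ and the noise. Condition~\ref{ass:monotone-Tk} follows from Young's inequality, for instance $\sqnorm{a+b} \le (1+\rho)\sqnorm{a} + (1+\rho^{-1})\sqnorm{b}$ with a small parameter absorbed into constants. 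The variance constant then degrades from $4\alpha^2\sigma^2/(B_k+1)$ to $6\alpha^2\sigma^2/(B_k+1)$.

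\textbf{Inner-loop error.} For fixed $p_j$, the map $q \mapsto p_j - \alpha\gamma_j\opF(q;\xi_j)$ is a contraction with modulus $\alpha\gamma_j L = \gamma_j/2 = 1/(j+2)$, since $\alpha = \frac{1}{2L}$. Its fixed point is $\opJ_{\alpha\gamma_j\opF(\cdot;\xi_j)}(p_j)$. After $m_{k,j}+1$ steps starting from $q_0 = p_j$, the error is at most $(j+2)^{-(m_{k,j}+1)}\norm{p_j - \opJ(p_j)}$. By the choice of $m_{k,j}$ this is at most $\frac{\rho_k}{16(j+2)}\,\alpha\gamma_j\norm{\opF(\opJ(p_j);\xi_j)}$, which in turn is at most $\frac{\rho_k}{16(j+2)}\,\alpha\gamma_j\norm{\opF(p_j;\xi_j)}$ up to a constant. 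The residual $\norm{\opF(p_j;\xi_j)}$ is bounded via $\norm{\opF(p_j;\xi_j)} \le L\norm{p_j - x_\star} + \norm{\opF(x_\star;\xi_j)}$, whose second moment is at most $2L^2\sqnorm{p_j-x_\star} + 2\sigma^2$, using $\opF(x_\star)=0$. The oracle count is then immediate: $\sum_j (m_{k,j}+2) \lesssim \sum_j \big(1 + \log(16/\rho_k)/\log(j+2)\big) = \cO\big(B_k[1 + \log(16/\rho_k)/\log(B_k+1)]\big)$. The last step uses $\sum_{j<B}1/\log(j+2) = \cO(B/\log(B+1))$.

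\textbf{Propagation.} Let $e_j = \norm{w_j - z_j}$. Each step of both recursions is an exact resolvent, which is nonexpansive, plus the inner error. Hence $e_{j+1} \le (1-\gamma_j)e_j + \delta_j$, where $\delta_j$ is the inner-loop error at step $j$, and so $e_{B_k} \le \sum_j \delta_j$. We also need $\expec{\sqnorm{p_j - x_\star}} \lesssim \sqnorm{u-x_\star} + \alpha^2\sigma^2$. This follows by an induction that mirrors the ideal analysis, since the iterates are averages of $u$ and near-resolvent outputs, and $x_\star$ is fixed for the mean resolvent. Summing with $\gamma_j/(j+2) \lesssim 1/(j+2)^2$ gives $\expec{e_{B_k}^2} \lesssim \rho_k^2(\sqnorm{u-x_\star} + \sigma^2/L^2)/64$. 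This is small enough for the bias bound \eqref{eq:monotone-bias}. For stability \eqref{eq:monotone-stability}, we write $\opT_k(u) - \opT_k(v) = [\widehat\opT_k(u) - \widehat\opT_k(v)] + (\text{errors at } u) - (\text{errors at } v)$, use a.s.\ nonexpansiveness of $\widehat\opT_k$, and apply Young's inequality with parameter $\rho_k$. The cross term costs $\rho_k\sqnorm{u-v} \le 2\rho_k(\sqnorm{u-x_\star}+\sqnorm{v-x_\star})$, and the error terms cost $\rho_k^{-1}\cdot\rho_k^2(\cdots)$, which fits the form $\rho_k(\sqnorm{u-x_\star}+\sqnorm{v-x_\star}+\sigma_k^2)$.

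\textbf{Main obstacle.} The delicate step is the $\sigma$-dependent part of the stability error. The inner-loop error involves $\alpha\sigma$ with no $1/(B_k+1)$ factor, whereas the condition only allows $\rho_k\sigma_k^2 \propto \rho_k\alpha^2\sigma^2/(B_k+1)$. Each error term carries the weight $\gamma_j/(j+2)\sim 1/(j+2)^2$, and its $\sigma$-part is a zero-mean-free but bounded term. First I would try to exploit that the inner error is propagated through contraction factors $\prod(1-\gamma_i) \sim (j+2)^2/(B_k+1)^2$. The errors made at early steps $j$ are damped by the later averaging toward $u$. With these factors the sum $\sum_j \frac{(j+2)^2}{(B_k+1)^2}\cdot\frac{\rho_k}{(j+2)^2}\alpha\sigma$ is $\lesssim \rho_k\alpha\sigma/(B_k+1)$, which is of the required order. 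If the damping argument fails, the fallback is to enlarge $m_{k,j}$ by an additive $\log(B_k+1)$ term, which changes the oracle count only by constants.
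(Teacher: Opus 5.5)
Your outline follows the paper's proof closely: the same coupling of $w_j$ and $z_j$ through common samples, the same contraction modulus $1/(j+2)$, the same inner error $\frac{\rho_k\alpha\gamma_j}{16(j+2)}\norm{\opF(p_j;\xi_j)}$, the same moment bound, and Young's inequality with a parameter of order $\rho_k$. However, the Propagation paragraph contains a false step. The undamped estimate $e_{B_k}\le\sum_j\delta_j$, i.e.\ $\expec{e_{B_k}^2}\lesssim\rho_k^2(\sqnorm{u-x_\star}+\alpha^2\sigma^2)$, is \emph{not} enough for \eqref{eq:monotone-bias}. Consider
$\expec{\sqnorm{\opT_k(u)-\opT(v)}}\le(1+\eta)\expec{\sqnorm{\widehat\opT_k(u)-\opT(v)}}+(1+\eta^{-1})\expec{e_{B_k}^2}$.
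The excess $\eta\sqnorm{u-v}$ can be charged to $\rho_k(\sqnorm{u-x_\star}+\sqnorm{v-x_\star})$ only if $\eta\le\rho_k/2$. The resulting bound then contains a term of order $\rho_k^{-1}\cdot\rho_k^2\alpha^2\sigma^2=\rho_k\alpha^2\sigma^2$. Condition \eqref{eq:monotone-bias} has no $\rho_k\sigma^2$ slot, so this term must fit into $\sigma_k^2-(1+\eta)\frac{4\alpha^2\sigma^2}{B_k+1}\approx\frac{2\alpha^2\sigma^2}{B_k+1}$. That fails whenever $\rho_k(B_k+1)\gtrsim 1$, e.g.\ $\rho_k=1$ with $B_k$ large, and the lemma allows any $\rho_k\in(0,1]$ and $B_k\ge 1$. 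So the obstacle you flagged only for stability is equally present in the bias bound.

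The fix is the damping computation in your last paragraph. It is exactly the paper's argument and always works; it is not merely something to try. Since the $\gamma_j$ are deterministic, pathwise $e_{B_k}\le\sum_{j}\bigl(\prod_{s=j+1}^{B_k-1}(1-\gamma_s)\bigr)\delta_j$ with $\prod_{s=j+1}^{B_k-1}(1-\gamma_s)=\frac{(j+1)(j+2)}{B_k(B_k+1)}$. Minkowski together with $\norm{\delta_j}_2\le\frac{\rho_k\gamma_j}{16(j+2)}(\norm{u-x_\star}+2\alpha\sigma)$ then yields $\norm{\opT_k(u)-\widehat\opT_k(u)}_2\le\frac{\rho_k}{8(B_k+1)}(\norm{u-x_\star}+2\alpha\sigma)$. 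Use this single damped bound for both \eqref{eq:monotone-bias} and \eqref{eq:monotone-stability}, with Young parameter $\rho_k/8$; your choice $\rho_k$ puts $2\rho_k$ rather than $\rho_k$ in front of the distance terms.

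Drop the fallback of enlarging $m_{k,j}$: it changes \cref{alg:resolvent}, so it would not prove the lemma as stated. Finally, the paper establishes \eqref{eq:streaming-comparison} and the a.s.\ nonexpansiveness of $\widehat\opT_k$ inside this very proof. It does so via \cref{lem:resolvent} and the identity $\opT(v)=\opJ_{\alpha\gamma_j\opF}((1-\gamma_j)\opT(v)+\gamma_jv)$, then chains the recursion after multiplying by $(j+1)(j+2)$. A self-contained argument should include that short step rather than assume it.
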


The parameter $\rho_k$ appears in Condition~\ref{ass:monotone-Tk} and controls the tolerance to violation of nonexpansiveness in expectation.
$B_k$ is the sample budget, controlling the variance bound $\sigma_k^2$.

\subsection{Optimized allocation of \texorpdfstring{$B_k$}{Bk} and complexity analysis}

Fix $N \ge 2$ and choose $\rho_k \equiv \frac{1}{2N^3}$, so that $Re^R = \cO \left(\frac{1}{N^2}\right)$ in \cref{cor:outer-monotone}, so together with \cref{lem:actual-resolvent}, the total variance term in~\eqref{eq:abstract-outer-bound} is bounded by a constant multiple of $A_N := \sum_{k = 0}^{N - 1} \frac{1}{(B_k + 1) (N - k)^2} \le \sum_{k = 0}^{N - 1} \frac{1}{B_k (N - k)^2}$.
\cref{lem:actual-resolvent} also shows that the total complexity is
\begin{align*}
	\cO \left( \sum_{k = 0}^{N - 1} B_k + \log (eN) \sum_{k = 0}^{N - 1} \frac{B_k}{\log (B_k + 1)} \right) = \cO \left(\log (eN) \left[N + \sum_{k = 0}^{N - 1} \frac{B_k}{\log (eN / (N - k))}\right] \right)
\end{align*}
where we show in Appendix~\ref{app:allocation-streaming} that the latter bound holds for any $B_k \ge 1$.
Now to optimize the allocation of $B_k$, similarly as in Section~\ref{subsec:cocoercive-batch-allocation}, we temporarily view $B_k$ as real numbers and use Cauchy-Schwarz to minimize $A_N$ while keeping $Q := \sum_{k = 0}^{N - 1} \frac{B_k}{\log (eN / (N - k))}$ fixed:
\[
	\left(\sum_{k = 0}^{N - 1} \frac1{(N - k) \sqrt{\log (eN / (N - k))}}\right)^2 \le \left(\sum_{k = 0}^{N - 1}\frac1{(N - k)^2B_k}\right) \left(\sum_{k = 0}^{N - 1}\frac{B_k}{\log (eN / (N - k))}\right).
\]
Equality holds when $B_k$ is proportional to $\frac{1}{N - k} \sqrt{\log \frac{eN}{N - k}}$.
More precisely, we choose:
\begin{equation}
\label{eq:weighted-monotone}
	B_k = \max\! \left\{1, \left\lceil \frac{1024\sigma^2}{\epsilon^2 (N - k)} \sqrt{\log (eN) \log\! \left(\frac{eN}{N - k}\right)} \right\rceil \right\}
\end{equation}
for $k = 0, \dots, N - 1$.
The resulting complexity bound is as follows, and we present its detailed proof in Appendix~\ref{app:allocation-streaming}.

\begin{corollary}[Oracle complexity for monotone root-finding]
\label{cor:weighted}
Under Assumptions~\ref{ass:oracle} and~\ref{ass:samplewise}, let $\epsilon > 0$, $\norm{x_0 - x_\star} \le D$.
Choose $N = \max \left\{2, \left\lceil \frac{32LD}{\epsilon}\right\rceil \right\}$ and $\rho_k \equiv \frac{1}{2N^3}$.
Run \ref{eq:dual} with $\opT_k$ computed by Algorithm~\ref{alg:resolvent}, using batch schedule \eqref{eq:weighted-monotone}.
Then its final output $y_N = \opT_{N - 1} (x_{N - 1})$ attains $\expec{\sqnorm{\opF (y_N)}} \le \epsilon^2$ using
\begin{equation}
\label{eq:streaming-complexity}
	\cO\! \left( \left(1 + \frac{LD}{\epsilon}\right) \log \left(1 + \frac{LD}{\epsilon}\right) + \frac{\sigma^2}{\epsilon^2} \log^2 \left( 1 + \frac{LD}{\epsilon} \right) \right)
\end{equation}
operator evaluations.
\end{corollary}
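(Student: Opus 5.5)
We combine \cref{cor:outer-monotone} (with $\alpha = \frac{1}{2L}$) and \cref{lem:actual-resolvent}, which gives Condition~\ref{ass:monotone-Tk} with $\sigma_k^2 = \frac{6\alpha^2\sigma^2}{B_k+1}$ and $\rho_k = \frac{1}{2N^3}$. We bound each term of \eqref{eq:abstract-outer-bound} by a fixed fraction of $\epsilon^2$, and then sum the per-iteration costs from \cref{lem:actual-resolvent} under the schedule \eqref{eq:weighted-monotone}.

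\textbf{Accuracy.} Since $R = N\rho_k = \frac{1}{2N^2} \le \frac18$, we have $e^R \le 2$ and $Re^R \le N^{-2}$.

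The first term is $\frac{8D^2}{\alpha^2N^2} = \frac{32L^2D^2}{N^2} \le \frac{\epsilon^2}{32}$ by the choice of $N$.

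For the bias term, $\frac{30}{\alpha^2}Re^R D^2 \le \frac{120L^2D^2}{N^2}$, which is again a small multiple of $\epsilon^2$. Its other part is
\[
	\frac{30}{\alpha^2}Re^R\sum_k \sigma_k^2 \le \frac{180\sigma^2}{N^2}\sum_k \frac{1}{B_k} \le \frac{180\sigma^2}{N^2}\sum_k\frac{1}{B_k (N-k)^2}\cdot N^2,
\]
using $(N-k)\le N$. This is dominated by the variance term below, up to constants.

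For the variance term, $\frac{24}{\alpha^2}\sum_k \frac{\sigma_k^2}{(N-k)^2} \le 144\sigma^2 A_N$. Plugging in $B_k \ge \frac{1024\sigma^2}{\epsilon^2(N-k)}\sqrt{\log(eN)\log\frac{eN}{N-k}}$ and using $\log\frac{eN}{N-k} \ge 1$ gives
\[
	A_N \le \frac{\epsilon^2}{1024\sigma^2\sqrt{\log(eN)}}\sum_k \frac{1}{(N-k)\sqrt{\log(eN/(N-k))}}.
\]
The remaining sum, $\sum_{m=1}^N \frac{1}{m\sqrt{\log(eN/m)}}$, is bounded by the integral $\int_1^{N} \frac{dm}{m\sqrt{1+\log(N/m)}} \le 2\sqrt{\log(eN)}$, plus $O(1)$ to absorb the endpoint term. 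Hence $144\sigma^2A_N \le c\,\epsilon^2$ with $c<1$. The constants $32$ and $1024$ are chosen precisely so that these three contributions total at most $\epsilon^2$.

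\textbf{Complexity.} Using the bound quoted before \eqref{eq:weighted-monotone}, which is proven in the same appendix, the total cost is $\cO(\log(eN)[N + Q])$ with $Q = \sum_k \frac{B_k}{\log(eN/(N-k))}$. Bound $B_k \le 1 + \frac{1024\sigma^2}{\epsilon^2(N-k)}\sqrt{\log(eN)\log\frac{eN}{N-k}}$. Then
\[
	Q \le N + \frac{1024\sigma^2\sqrt{\log(eN)}}{\epsilon^2}\sum_m \frac{1}{m\sqrt{\log(eN/m)}} = N + \cO\!\left(\frac{\sigma^2}{\epsilon^2}\log(eN)\right),
\]
by the same integral estimate. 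Multiplying by $\log(eN)$ and using $N = \cO(1 + LD/\epsilon)$, so that $\log(eN) = \cO(1 + \log(1 + LD/\epsilon))$, gives \eqref{eq:streaming-complexity}. The degenerate case $LD/\epsilon$ small, where $\log(eN)$ is a constant rather than small, is absorbed by the additive $1$ and the $\log 2$ floor.

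\textbf{Main obstacle.} The delicate step is the intermediate estimate $\sum_k B_k\big[1+\frac{\log(16/\rho_k)}{\log(B_k+1)}\big] = \cO\big(\log(eN)[N+Q]\big)$, which requires $\log(B_k+1) \gtrsim \log\frac{eN}{N-k}$ whenever $B_k$ is large. I would handle it by splitting indices into two cases.
\begin{itemize}
\item If $B_k \ge \frac{N}{N-k}$, then $\log(B_k+1) \ge \log\frac{N}{N-k}$, hence $\log(B_k+1) \gtrsim \log\frac{eN}{N-k}$ since $\log(B_k+1) \ge \log 2$.
\item Otherwise $B_k < \frac{N}{N-k}$, so $\sum_k B_k\log(16/\rho_k) \lesssim \log(eN)\sum_k\frac{N}{N-k}$. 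This is $N\log^2(eN)$, which overshoots the target by a log factor.
\end{itemize}
The second case therefore needs refinement. I would first check whether $B_k \le \frac{N}{N-k}$ is automatically equal to $1$ under the schedule unless $\sigma^2/\epsilon^2$ is comparable to $N$. If so, that regime can be charged to the $\frac{\sigma^2}{\epsilon^2}\log^2$ term instead.
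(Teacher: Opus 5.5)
Your accuracy argument matches the paper's. You combine Corollary~\ref{cor:outer-monotone} (with $\alpha=\frac{1}{2L}$) and Lemma~\ref{lem:actual-resolvent}, absorb $R\sum_k\sigma_k^2$ into the weighted variance sum via $\frac{1}{N^2}\le\frac{1}{(N-k)^2}$, and compare the monotone sum with an integral. One bookkeeping point: with your cruder $Re^R\le N^{-2}$, the three pieces total $\frac{800}{1024}\epsilon^2$ only if you use the sharp form $\frac{1}{\sqrt{\log(eN)}}+2\bigl(\sqrt{\log(eN)}-1\bigr)\le 2\sqrt{\log(eN)}$. An extra additive $O(1)$ of size one makes your estimate exceed $\epsilon^2$ at $N=2$.

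\textbf{The gap is the cost reduction.} You need
\[
\sum_{k=0}^{N-1}B_k\Bigl[1+\frac{\log(16/\rho_k)}{\log(B_k+1)}\Bigr]=\cO\Bigl(\log(eN)\Bigl[N+\sum_{k=0}^{N-1}\frac{B_k}{\log(eN/(N-k))}\Bigr]\Bigr).
\]
This is part of what the proof of this corollary must establish, so you cannot cite it, and your case split does not prove it. With threshold $B_k\ge\frac{N}{N-k}$, the remaining indices are only bounded by $\log(eN)\sum_k\frac{N}{N-k}\approx N\log^2(eN)$.

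Your proposed repair, charging these indices to the $\frac{\sigma^2}{\epsilon^2}\log^2$ term, also fails. Take $\frac{1024\sigma^2}{\epsilon^2}=\frac{N}{2\log(eN)}$. Then every $k\ge 1$ lies in your second case and $\sum_k B_k\asymp N\log(eN)$. The bound $\log(B_k+1)\ge\log 2$ therefore gives $\asymp N\log^2(eN)$. But the target, and the true cost, is $\cO(N\log(eN))$, since $\frac{\sigma^2}{\epsilon^2}\log^2(eN)\asymp N\log(eN)$ in this regime.

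\textbf{The missing idea is a threshold sublinear in $\frac{N}{N-k}$.} The paper uses, for $b,t\ge1$,
\[
\frac{b}{\log(b+1)}\le\frac{2b}{t}+\frac{e^{t/2}}{\log 2},
\]
obtained by splitting at $b=e^{t/2}$, applied with $t=\log\frac{eN}{N-k}$.
\begin{itemize}
\item Indices with $B_k\ge\sqrt{eN/(N-k)}$ satisfy $\log(B_k+1)\ge\frac12\log\frac{eN}{N-k}$.
\item The remaining indices contribute at most $\frac{\sqrt{eN}}{\log 2}\sum_{m=1}^N m^{-1/2}\le\frac{2\sqrt e}{\log 2}N$. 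This is an $\cO(N)$ sum, rather than your $N\sum_m\frac1m$.
\end{itemize}
Combine this with $\log(16/\rho_k)=\log(32N^3)\le 4\log(eN)$ and $\sum_k B_k\le\log(eN)\sum_k\frac{B_k}{\log(eN/(N-k))}$. This gives the reduction for arbitrary $B_k\ge1$, and your remaining complexity computation then goes through as written.

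Finally, the regime $LD/\epsilon\to 0$ is not ``absorbed by a $\log 2$ floor.'' As $LD/\epsilon\to0$, $\log(1+LD/\epsilon)\to 0$, whereas the cost is at least $N\ge 2$ and at least of order $\sigma^2/\epsilon^2$. The bound should be read with $\log(eN)$ in place of $\log(1+LD/\epsilon)$, i.e.\ for $LD/\epsilon$ bounded below. The paper itself concludes only for $LD/\epsilon\gg1$.
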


\section{Numerical experiments}
\label{sec:experiments}

We evaluate the empirical effectiveness of \ref{eq:dual} for cocoercive and monotone stochastic root-finding problems.
We consider two problems, both with stochastic oracle of the form $\opF (x; \xi) = \opF (x) + \xi$ where $\xi \sim \mathcal N \left( 0, \frac{\sigma^2 I_d}{d} \right)$, and use the residual norm $\norm{\opF (x_k)}$ as a measure of convergence.

\paragraph{Baselines.}
Denote by $\cB_k$ and $\cB_k'$ independent mini-batches of fixed size $B$, and let $\opF_{\cB_k}$ and $\opF_{\cB_k'}$ be the corresponding mini-batch operators.
We consider the following basic algorithms: Stochastic Gradient Descent-Ascent (\algname{SGDA}), Stochastic \algname{OHM} (\algname{S-OHM}) and Stochastic Extragradient (\algname{SEG}), where the first two algorithms are used in Experiment~1 where $\opF$ is cocoercive, and \algname{SEG} is used for both experiments.
Their update rules are given by
\[
\begin{aligned}
	\text{\algname{SGDA}}:\quad & x_{k + 1} = x_k - \eta\opF_{\cB_k} (x_k) \\
	\text{\algname{SEG}}:\quad & x_{k + 1 / 2} = x_k - \eta\opF_{\cB_k} (x_k), \qquad x_{k + 1} = x_k - \eta\opF_{\cB_k'} (x_{k + 1 / 2}) \\
	\text{\algname{S-OHM}}:\quad & x_{k + 1} = \frac{1}{k + 2} x_0 + \frac{k + 1}{k + 2} (x_k - \eta\opF_{\cB_k} (x_k))
\end{aligned}
\]
In Experiment~2, we also include \algname{SEG} with averaging, which uses separately tuned parameters.
Both experiments also use the practical single-loop \algname{RAIN} implementation from \citet{chenNearoptimalAlgorithmsMaking2024}:
\begin{align*}
	x_{k + 1 / 2} = x_k - \eta \left(\opF_{\cB_k} (x_k) + r_k (x_k) \right), \qquad x_{k + 1} = x_k - \eta \left(\opF_{\cB_k'} (x_{k + 1 / 2}) + r_k (x_{k + 1 / 2}) \right)
\end{align*}
where
\[
\begin{aligned}
	r_k (v) & = \lambda\gamma\sum_{t = 0}^{k - 1} (1 + \gamma)^t (v - x_t), \qquad \gamma = e^{g / T} - 1, \qquad \lambda = \frac{a}{e^g - 1}, \qquad \eta = \frac{c}{L + a}
\end{aligned}
\]
and $g, a, c$ are tunable hyperparameters.
For all baseline algorithms, we use a fixed oracle budget $Q$ and take $T = \lfloor Q / B\rfloor$ for \algname{SGDA} and \algname{S-OHM}, and $T = \lfloor Q / (2B) \rfloor$ for \algname{SEG} and \algname{RAIN}.

\begin{figure}[!ht]
\centering
\includegraphics[width=\linewidth]{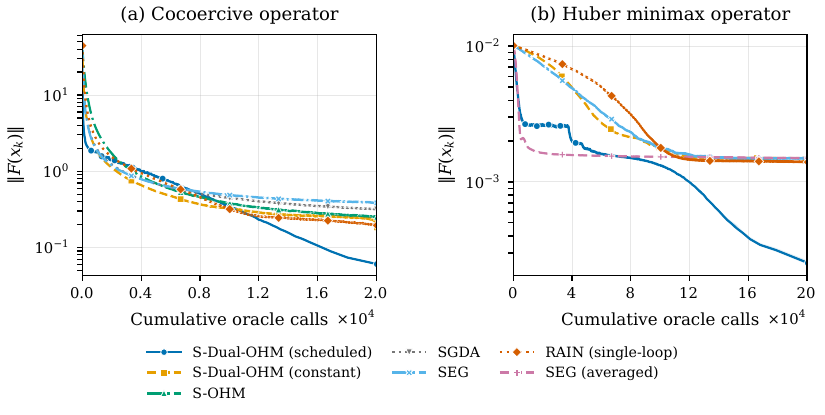}
\vspace{-.5cm}
\caption{Mean residual norms over ten independent runs.
The horizontal axis counts every sampled-operator evaluation, including all samples in mini-batches and inner solves.}
\label{fig:experiments}
\end{figure}

\paragraph{Experiment 1: Cocoercive problem.}
We use the worst-case affine operator construction from \citet{ParkRyu2022_exact}, which has also been considered in \citet{yoonDirectAccelerationStochastic2026}.
Let $d = 1001$, and define
\[
	\opH (x) = \left(x_d - \frac{2}{\sqrt d}, -x_1, \ldots, -x_{d - 1}\right), \qquad \opF (x) = \opH (x - s) + (x - s), \qquad \opT = \opI - \eta \opF
\]
where $s \sim \mathcal N (0, I_d)$ is drawn and fixed across all runs.
The operator $\opF$ is $\nicefrac12$-cocoercive and has a unique zero $x_\star = s + \frac{1}{\sqrt{d}}\boldsymbol 1$.
We use $x_0 = 0$, $\sigma = 2$, and a budget of $2 \times 10^4$ oracle calls.
Our proposed algorithm is the \emph{scheduled} version of \ref{eq:dual}, which uses the allocation profile in~\eqref{eq:optimal-cocoercive}, normalized to the oracle budget.
We provide the full details in Appendix~\ref{app:experiments}.
By contrast, \ref{eq:dual} \emph{(constant)} denotes the version using a constant batch-size $B$.
For that algorithm, \algname{SGDA} and \algname{S-OHM}, we tune $B$ over the grid $\{1, 2, 4, 8, 16, 32, 64, 128, 256\}$, and $\eta$ over $\{0.005, 0.01, 0.025, 0.05, 0.1, 0.25, 0.5, 0.75, 1\}$.
\algname{SEG} uses the same grid for $B$ and $\eta \in \{0.005, 0.01, 0.025, 0.05, 0.1, 0.2, 0.3, 0.45\}$.
\algname{RAIN} uses $B \in \{1, 4, 16, 64\}$, $g \in \{1, 3, 6\}$, $a \in \{0.001, 0.01, 0.1, 1\}$, and $c \in \{0.02, 0.05, 0.1, 0.2, 0.5, 0.8\}$.
We observe that \algname{RAIN} and \ref{eq:dual} (constant) are the most competitive among the baselines, and notably, \ref{eq:dual} (scheduled) outperforms its constant-batch counterpart by a large margin (\cref{fig:experiments}(a)), even though they are sharing the same designing principle and only the batch schedule is chosen differently.

\paragraph{Experiment 2: Beyond cocoercivity.}
We use the Huber minimax problem used in \citet{chenNearoptimalAlgorithmsMaking2024}: writing $c_\nu (t) = \min\{\nu, \max\{-\nu, t\}\}$ componentwise, its saddle operator is
\[
	\opF (x^{(1)}, x^{(2)}) = \left( (1 - \delta) c_\nu \bigl(x^{(1)}\bigr) + \delta x^{(2)}, (1 - \delta) c_\nu \bigl(x^{(2)}\bigr) - \delta x^{(1)}\right)
\]
where $x = (x^{(1)}, x^{(2)}) \in \reals^{1024}$ and we use $\delta = 0.01, \nu = 5 \times 10^{-5}$.
This operator is monotone and $1$-Lipschitz but not cocoercive.
We choose a fixed random unit vector $x_0$, $\sigma = 0.032$, and a budget of $2 \times 10^5$ calls.
Our scheduled \ref{eq:dual} uses $\alpha = \frac{1}{2}$ and the allocation profile in \eqref{eq:weighted-monotone}, normalized to the oracle budget for each candidate $N$.
We provide the full details in Appendix~\ref{app:experiments}.
For \cref{fig:experiments}(b), we use a practical variant of Algorithm~\ref{alg:resolvent} where we use constant-length loop steps ($m_{k, j} \equiv m$), while we also report below the residual value from Algorithm~\ref{alg:resolvent} run with $m_{k, j}$ set precisely as the theory prescribes, which are more conservative at the tested oracle budget.

After an initial joint search over $m$ and $N$, we fix $m = 1$ (two descent iterations per inner sample) and further tune $N$ via refined grid search, separately for the two variants.
We select $N = 24000$ for \ref{eq:dual} (scheduled) and $N = 3200$ for \ref{eq:dual} (constant).
\algname{SEG} selects $B \in \{1, 4, 16, 64, 256\}$ and $\eta \in \{0.005, \allowbreak 0.01, \allowbreak 0.025, \allowbreak 0.05, \allowbreak 0.1, \allowbreak 0.2, \allowbreak 0.4, \allowbreak 0.7, \allowbreak 0.9, \allowbreak 1, \allowbreak 2, \allowbreak 5, \allowbreak 10\}$.
For \algname{RAIN}, we first sweep the grid $(B, g, a, c) \in \{1, 8, 64\} \times \{1, 3, 6\} \times \{0.0001, 0.001, 0.01, 0.1, 1\} \times \{0.02, 0.05, 0.1, 0.2, 0.5, 0.8, 1, 2, 5, 10\}$, and later refine the search using the union of $\{32, 64, 128\} \times \{2, 3, 4, 6\} \times \{0.003, 0.01, 0.03\} \times \{0.35, 0.5, 0.7\}$ and $\{1\} \times \{6\} \times \{0.03, 0.1, 0.3\} \times \{0.005, 0.01, 0.015, 0.02, 0.03, 0.05\}$.

\cref{fig:experiments}(b) shows that the practical variant of scheduled \ref{eq:dual} achieves the smallest residual of $2.55 \times 10^{-4}$.
The final residual values were $1.47 \times 10^{-3}$ for \ref{eq:dual} (constant) and $1.41 \times 10^{-3}$ for \algname{RAIN}.
We also test the theoretically prescribed schedule following Algorithm~\ref{alg:resolvent} and \cref{lem:actual-resolvent}, and observe that it reaches final residual norm $1.46 \times 10^{-3}$ at its tuned horizon $N = 2400$.

\section{Conclusion}
\label{sec:conclusion}

In this work, we show that the robustness of dual-anchor acceleration observed in recent prior work can be translated into near-optimal oracle efficiency for stochastic root-finding.
This is achieved by carefully controlling how errors at different iterations contribute to the residual bound.
In the cocoercive setting, without materially changing the underlying dual-anchor acceleration mechanism but only changing the batch-size schedules, we close the polynomial complexity gap to the lower bound.
We show that the same principle extends to samplewise monotone and Lipschitz problems via stochastic resolvent approximations.
These results strengthen the motivation for studying distinct deterministic acceleration mechanisms; their different responses to noise can provide alternative, clean routes to designing efficient stochastic algorithms.

Our analysis highlights the separation between choosing the acceleration mechanism governing the outer loop and constructing random maps that meet suitable requirements.
While we have used inner loops to handle these requirements, the broader framework we propose is not necessarily confined to algorithms with nested loop structures.
Natural directions for future work include designing a single-loop algorithm for monotone problems from the same principles, removing the logarithmic factors from oracle complexity, developing adaptive budget schedules that require less knowledge of the problem, or extending our framework to horizon-free algorithms.

\label{page:main-end}

\subsection*{AI use statement}
The authors used AI tools to carry out the technical derivations quickly based on our initial conceptual ideas.
We have carefully checked, rederived and significantly restructured the mathematical arguments.
We have also used AI to assist with manuscript drafting and editing, and implementation of the numerical experiments.
The authors checked every part of the paper and take full responsibility for its final content.



\section*{Acknowledgments}
TaeHo Yoon’s contribution to this work was supported by NSF CCF 2504626.
Nicolas Loizou’s contribution to this work was supported by NSF CCF 2504626 and NSF CAREER 2542902.

\bibliographystyle{plainnat}
\bibliography{ref}

\clearpage
\appendix
\part*{Appendix}
\medskip
\tableofcontents
\newpage

\section{Convergence of S-Dual-OHM with general random maps}
\label{app:fixed-point}

We prove Theorem~\ref{thm:outer-fixed-point} and \ref{cor:outer-monotone}.
With $\rho_k = 0$, Theorem~\ref{thm:outer-fixed-point} immediately recovers Theorem~\ref{thm:fixed-point}.
Throughout the section, $\opT$ is nonexpansive and $\opT (x_\star) = x_\star$.

\subsection{Identity for arbitrary evaluation maps}

The following identity holds for~\ref{eq:dual} run with arbitrary maps $\opT_0, \ldots, \opT_{N - 2}$.

\begin{lemma}[\cite{yoonDirectAccelerationStochastic2026}, Lemma~4.2]
\label{lem:app-identity}
Set $a_{N, k} = \frac1{(N - k) (N - k - 1)}$ and
\[
	Q_k = \frac12 \left(\sqnorm{x_k - x_{N - 1}} - \sqnorm{\opT_k (x_k) - \opT (x_{N - 1})}\right).
\]
Then we have
\begin{equation}
\label{eq:app-terminal-identity}
\begin{aligned}
	0 = {} & \frac{N - 1}{4}\sqnorm{x_{N - 1} - \opT (x_{N - 1})} + \frac12\inprod{x_{N - 1} - \opT (x_{N - 1})}{x_{N - 1} - x_0} + \frac N2\sum_{k = 0}^{N - 2}a_{N, k}Q_k .
\end{aligned}
\end{equation}
\end{lemma}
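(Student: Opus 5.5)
The plan is to treat \eqref{eq:app-terminal-identity} as a purely algebraic identity. I will verify it directly from the update rule, using no property of $\opT$ or of the $\opT_k$. Abbreviate $g_k = \opT_k (x_k)$ for $k = -1, \ldots, N-2$ with $g_{-1} = x_0$, $y = x_{N-1}$, $t = \opT (x_{N-1})$, and $d_k = g_k - g_{k-1}$. Since $t$ enters only through the terms shown, I regard $g_{-1}, \dots, g_{N-2}$ and $t$ as free vectors. The iterates $x_k$ are then determined by them, and the claim is that a certain quadratic form in these vectors vanishes identically.

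\textbf{Closed form for the iterates.} Writing $\frac{N-k-1}{N-k} = 1 - \frac{1}{N-k}$ and summing the recursion from $0$ to $k$, the sum of the $d_j$ telescopes to $g_k - x_0$. This gives
\[
	x_{k+1} = g_k - \sum_{j=0}^{k}\frac{d_j}{N-j}, \qquad\text{and in particular}\qquad x_0 - y = \sum_{j=0}^{N-2}\frac{d_j}{N-j} - (g_{N-2} - x_0).
\]
This expresses every $x_k$, and hence $y$, linearly in the $d_j$. I will also use the partial-fraction identity $a_{N,k} = \frac{1}{N-k-1} - \frac{1}{N-k}$. It gives $\sum_{k=0}^{N-2} a_{N,k} = 1 - \frac1N$ and will serve for Abel summation.

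\textbf{Separating the dependence on $t$.} First I check the coefficient of $\sqnorm{t}$ in \eqref{eq:app-terminal-identity}. It is $\frac{N-1}{4} - \frac N4\sum_k a_{N,k} = \frac{N-1}{4} - \frac{N-1}{4} = 0$.

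Next I collect the terms linear in $t$. They are
\[
	-\frac{N-1}{2}\inprod{t}{y} - \frac12\inprod{t}{y - x_0} + \frac N2\sum_k a_{N,k}\inprod{t}{g_k}.
\]
These must cancel, which reduces to the vector identity $N\sum_k a_{N,k} g_k = N y - x_0$. I will verify this by substituting the closed form for $y$ and applying Abel summation to $\sum_k a_{N,k} g_k$ with the partial-fraction weights. The weights $\frac1{N-k-1} - \frac1{N-k}$ turn that sum into $\sum_j \frac{d_j}{N-j}$ plus boundary terms in $g_{N-2}$ and $x_0$, which match the closed form for $y$.

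\textbf{The $t$-free part.} What remains is
\[
	\frac{N-1}{4}\sqnorm{y} + \frac12\inprod{y}{y - x_0} + \frac N4\sum_k a_{N,k}\bigl(\sqnorm{x_k - y} - \sqnorm{g_k}\bigr) = 0 .
\]
I expect this quadratic form in the $g$'s to be the main obstacle, since it is pure bookkeeping. My first approach is to use $x_k = g_{k-1} - \sum_{j<k} d_j/(N-j)$ and rewrite $\sqnorm{x_k - y} - \sqnorm{g_k}$ via the linear identity from the previous step. I will then telescope in $k$ with the partial-fraction weights.

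If that becomes unwieldy, the fallback is induction on the number of steps. The difficulty is that the coefficients depend on the horizon $N$, so I would fix $N$ and induct on a truncated identity at step $m$, namely
\[
	\frac{N-m-1}{4}\sqnorm{x_m - t} + \frac12\inprod{x_m - t}{x_m - x_0} + \frac N2\sum_{k<m} a_{N,k} Q_k^{(m)} = 0,
\]
with $x_m$ in place of $x_{N-1}$ and a suitably reweighted $Q_k^{(m)}$. At $m = N-1$ this becomes the claimed identity. The inductive step uses only the single update $x_{m+1} - x_m = \frac{N-m-1}{N-m}\,d_m$, so each step is a short polarization computation.

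Alternatively, since the lemma is quoted from \cite[Lemma~4.2]{yoonDirectAccelerationStochastic2026}, whose proof never uses that $g_k = \opT(x_k)$, it also suffices to point out that the same algebra carries over verbatim to arbitrary maps $\opT_k$.
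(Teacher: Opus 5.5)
Your reduction is correct as far as it goes. Treating $g_{-1},\dots,g_{N-2}$ and $t$ as free vectors is legitimate, the $\sqnorm{t}$ coefficient vanishes, and the $t$-linear terms cancel because $N\sum_k a_{N,k}g_k = Ny - x_0$. That is just the $j=N-1$ case of the unrolled formula \eqref{eq:app-positive} (note $a_{N,N-2}=\tfrac12$). But this is the easy part. Given those cancellations, the remaining $t$-free identity is equivalent (evaluate at $t=y$) to $\sum_{k=0}^{N-2} a_{N,k}\bigl(\sqnorm{x_k - y} - \sqnorm{g_k - y}\bigr) = 0$. This quadratic identity is the whole content of the lemma, and you do not prove it. ``Rewrite via the linear identity and telescope with the partial-fraction weights'' names no telescoping quantity. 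No term-by-term cancellation is apparent either, since every $\sqnorm{x_k - y}$ couples $x_k$ to the terminal point $y$, which depends on all of the $g_j$. Your last option only defers to \cite{yoonDirectAccelerationStochastic2026} instead of supplying the computation.

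The fallback induction does not work as stated. At $m=N-1$ the coefficient $\frac{N-m-1}{4}$ is $0$, not $\frac{N-1}{4}$, so the truncated identity does not specialize to the claim. More seriously, suppose the terminal point is $x_m$ and the weights $a_{N,k}$ are left unmodified. Matching the $\sqnorm{t}$ coefficient then forces the leading coefficient to be $\frac{m}{4(N-m)}$. The $t$-linear terms then require $\frac{N}{N-m}x_m - x_0 = N\sum_{k<m}a_{N,k}g_k$, and by \eqref{eq:app-positive} this fails for $1\le m<N-1$: the step sizes are tied to the horizon $N$, not to $m+1$. So all the work would sit in the unspecified ``suitable reweighting.''

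The missing idea is the paper's: hold the terminal pair $(y,t)$ fixed and telescope in $k$ with the potential
\[
	U_k = -\frac{N-k-1}{N-k}\sqnorm{g_{k-1} - x_k + y - t} + \frac{2}{N-k}\inprod{g_{k-1} - x_k + y - t}{x_k - y}.
\]
A single polarization computation, using only the update $x_{k+1}-x_k = \frac{N-k-1}{N-k}(g_k - g_{k-1})$, shows that $U_k - U_{k+1} = 2a_{N,k}Q_k$. The boundary values are:
\begin{itemize}
\item $U_{N-1} = 0$, because $x_{N-1} - y = 0$;
\item $U_0 = -\frac{N-1}{N}\sqnorm{y-t} + \frac2N\inprod{y-t}{x_0-y}$, because $g_{-1}=x_0$.
\end{itemize}
Summing over $k$ and multiplying by $N/4$ gives \eqref{eq:app-terminal-identity}. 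This turns the verification into a local check in four vectors ($g_{k-1}-x_k$, $g_k-x_k$, $x_k-y$, $y-t$) instead of a global quadratic identity in $N+1$ vectors.
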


\begin{proof}
Define the potential
\[
\begin{aligned}
	U_k = {} & -\frac{N - k - 1}{N - k} \sqnorm{\opT_{k - 1} (x_{k - 1}) - x_k + x_{N - 1} - \opT (x_{N - 1})} \\
	& + \frac2{N - k} \inprod{\opT_{k - 1} (x_{k - 1}) - x_k + x_{N - 1} - \opT (x_{N - 1})}{x_k - x_{N - 1}}.
\end{aligned}
\]
One can directly verify
\begin{align}
\label{eq:S-Dual-OHM-Lyapunov}
	U_k - U_{k + 1} = 2a_{N, k}Q_k
\end{align}
by substituting the update rule~\ref{eq:dual} and simplifying.
This has been done in an equivalent form using $\opT = \opI - \alpha\opF$ for cocoercive operator $\opF$, in \citet[Lemma~4.2]{yoonDirectAccelerationStochastic2026}.
Because we have $U_{N - 1} = 0$, using $\opT_{-1} (x_{-1}) = x_0$, we have
\[
	U_0 = -\frac{N - 1}{N}\sqnorm{x_{N - 1} - \opT (x_{N - 1})} + \frac2N\inprod{x_{N - 1} - \opT (x_{N - 1})}{x_0 - x_{N - 1}}.
\]
Thus, summing \eqref{eq:S-Dual-OHM-Lyapunov} for $k = 0, \dots, N - 2$ and multiplying by $N / 4$ proves the identity~\eqref{eq:app-terminal-identity}.
\end{proof}

Observe that
\begin{align*}
	& \inprod{x_{N - 1} - \opT (x_{N - 1})}{x_{N - 1} - x_\star} - \frac12\sqnorm{x_{N - 1} - \opT (x_{N - 1})} \\
	& = \frac{1}{2} \left( \sqnorm{x_{N - 1} - x_\star} - \sqnorm{\opT (x_{N - 1}) - x_\star} \right) \ge 0
\end{align*}
Using the above inequality and taking total expectation in \eqref{eq:app-terminal-identity} gives
\begin{align*}
	0 & \ge \expec{\frac{N}{4} \sqnorm{x_{N - 1} - \opT (x_{N - 1})} + \frac{1}{2} \inprod{x_{N - 1} - \opT (x_{N - 1})}{x_\star - x_0}} + \frac{N}{2} \sum_{k = 0}^{N - 2} a_{N, k} \expec{Q_k} \\
	& \ge \frac{N}{8} \expec{\sqnorm{x_{N - 1} - \opT (x_{N - 1})}} - \frac{1}{2N} \sqnorm{x_0 - x_\star} + \frac{N}{2} \sum_{k = 0}^{N - 2} a_{N, k} \expec{Q_k}
\end{align*}
and rearranging, we get
\begin{equation}
\label{eq:app-residual-comparison}
	\expec{\sqnorm{x_{N - 1} - \opT (x_{N - 1})}} \le \frac{4D^2}{N^2} - 4\sum_{k = 0}^{N - 2}a_{N, k}\expec{Q_k} .
\end{equation}

\subsection{Proof of Theorem~\ref{thm:outer-fixed-point}}

Unrolling the \ref{eq:dual} recursion gives
\begin{equation}
\label{eq:app-positive}
	x_j = \frac1N x_0 + \sum_{t = 0}^{j - 2}a_{N, t}\opT_t (x_t) + \frac{N - j}{N - j + 1}\opT_{j - 1} (x_{j - 1})
\end{equation}
for $j = 1, \dots, N - 1$; see \citep[Lemma~4.4]{yoonDirectAccelerationStochastic2026}.
Note that this is a convex combination.
Define the common moment bound in advance: with $R = \sum_{t = 0}^{N - 2}\rho_t$, set
\[
	M^2 = e^R \left(D^2 + \sum_{t = 0}^{N - 2}\sigma_t^2\right) .
\]
Taking $v = x_\star$ in~\eqref{eq:monotone-bias} gives
\[
	\expec{\sqnorm{\opT_k (u) - x_\star}} \le (1 + \rho_k) \sqnorm{u - x_\star} + \sigma_k^2
\]
for $k = 0, 1, \dots, N - 2$.
Therefore, using induction and the fact that $x_j$ is a convex combination of $x_0$ and $\opT_t (x_t)$ for $t = 0, \dots, j - 1$, we obtain
\begin{align}
\label{eq:iterate-moment-bound}
	\expec{\sqnorm{x_j - x_\star}} \le \prod_{t = 0}^{j - 1} (1 + \rho_t) \left(D^2 + \sum_{t = 0}^{j - 1}\sigma_t^2\right) \le M^2.
\end{align}

Now, write $\opS_k = \expec{\opT_k}$ for the deterministic mean map.
By setting $u = v$ in \eqref{eq:monotone-bias} and using the moment bound~\eqref{eq:iterate-moment-bound}, we see that
\begin{equation}
\label{eq:app-centered-variance}
	\nu_k^2 := \expec{\sqnorm{\opT_k (x_k) - \opS_k (x_k)}} \le \expec{\sqnorm{\opT_k (x_k) - \opT (x_k)}} \le \sigma_k^2 + 2\rho_k M^2
\end{equation}
where the first inequality follows from $\expec{\sqnorm{\opT_k (x_k) - \opT (x_k)}} = \expec{\sqnorm{\opT_k (x_k) - \opS_k (x_k)}} + \expec{\sqnorm{\opS_k (x_k) - \opT (x_k)}}$.

Now suppose that we replace only $\opT_k$ by an independent copy $\opT_k'$, and keep all the other maps the same.
Denote the resulting iterates by $x_j^{(k)}$.
The two trajectories agree through time $k$, and the copied trajectory has the same distribution as the original one, so
\[
	x_j^{(k)} = x_j\quad (0 \le j \le k), \qquad \expec{\sqnorm{x_t^{(k)} - x_\star}} \le M^2 \quad (0 \le t \le N - 1).
\]
Conditional on $x_k$, the evaluations $\opT_k (x_k)$ and $\opT_k' (x_k)$ are independent and have the same mean $\opS_k (x_k)$.
Therefore,
\begin{align*}
	\expec{\sqnorm{\opT_k (x_k) - \opT_k' (x_k)}} & = \expec{\sqnorm{\opT_k (x_k) - \opS_k (x_k)}} + \expec{\sqnorm{\opT_k' (x_k) - \opS_k (x_k)}} = 2\nu_k^2.
\end{align*}
For $t > k$, the pair $(x_t, x_t^{(k)})$ is independent of the $\opT_t$, which is shared between the two trajectories.
Thus we can use~\eqref{eq:monotone-stability} and apply \eqref{eq:iterate-moment-bound} to obtain
\begin{align}
	\expec{\sqnorm{\opT_t (x_t) - \opT_t (x_t^{(k)})}} & = \expec{\condexp{\sqnorm{\opT_t (x_t) - \opT_t (x_t^{(k)})}} {x_t, x_t^{(k)}}} \nonumber \\
	& \le \expec{\sqnorm{x_t - x_t^{(k)}}} + \rho_t \left(\expec{\sqnorm{x_t - x_\star}} + \expec{\sqnorm{x_t^{(k)} - x_\star}} + \sigma_t^2\right) \nonumber \\
	& \le \expec{\sqnorm{x_t - x_t^{(k)}}} + \rho_t (2M^2 + \sigma_t^2) . \label{eq:nonexpansive-with-error}
\end{align}
Again, \eqref{eq:app-positive} is a convex combination, and squared norm $\sqnorm{\cdot}$ is convex, so using \eqref{eq:nonexpansive-with-error} we obtain
\begin{equation}
\label{eq:app-replacement-recursion}
\begin{aligned}
	\expec{\sqnorm{x_j - x_j^{(k)}}} & \le 2a_{N, k}\nu_k^2 + \sum_{t = k + 1}^{j - 2}a_{N, t} \left(\expec{\sqnorm{x_t - x_t^{(k)}}} + \rho_t (2M^2 + \sigma_t^2) \right) \\
	& \quad + \frac{N - j}{N - j + 1} \left(\expec{\sqnorm{x_{j - 1} - x_{j - 1}^{(k)}}} + \rho_{j - 1} (2M^2 + \sigma_{j - 1}^2) \right)
\end{aligned}
\end{equation}
for $j > k + 1$.
Using this bound, we will show by induction that, for $j = k + 1, \dots, N - 1$,
\begin{equation}
\label{eq:app-replacement-iterates}
\begin{aligned}
	& \expec{\sqnorm{x_j - x_j^{(k)}}} \\
	& \le \nu_k^2 \left(1 + a_{N, k} (N - j) (N - j - 1) \right) + \frac12\sum_{t = k + 1}^{j - 1}\rho_t (2M^2 + \sigma_t^2) \left(1 + a_{N, t} (N - j) (N - j - 1) \right).
\end{aligned}
\end{equation}
For $j = k + 1$, because $x_k$ and $\opT_{k - 1} (x_{k - 1})$ are shared, we have
\begin{align*}
	\expec{\sqnorm{x_{k + 1} - x_{k + 1}^{(k)}}} & \le \frac{N - k - 1}{N - k} \expec{\sqnorm{\opT_k (x_k) - \opT_k' (x_k)}} \\
	& = \frac{2 (N - k - 1)}{N - k}\nu_k^2 \\
	& = \nu_k^2 \left(1 + a_{N, k} (N - k - 1) (N - k - 2) \right),
\end{align*}
which proves~\eqref{eq:app-replacement-iterates} at the base case.
Now fix $j \ge k + 2$ and suppose the bound holds through $j - 1$.
Substituting it into~\eqref{eq:app-replacement-recursion} and interchanging the finite sums gives
\begin{align}
\label{eqn:xj-xjk-diff-bound-large}
\begin{aligned}
	& \expec{\sqnorm{x_j - x_j^{(k)}}} \\
	& \quad \le \nu_k^2 \Bigg[2a_{N, k} + \sum_{t = k + 1}^{j - 2}a_{N, t} \left(1 + a_{N, k} (N - t) (N - t - 1) \right) \\
	& \hspace{60pt} + \frac{N - j}{N - j + 1} \left(1 + a_{N, k} (N - j + 1) (N - j) \right) \Bigg] \\
	& \qquad + \frac12\sum_{s = k + 1}^{j - 2}\rho_s (2M^2 + \sigma_s^2) \Bigg[2a_{N, s} + \sum_{t = s + 1}^{j - 2}a_{N, t} \left(1 + a_{N, s} (N - t) (N - t - 1) \right) \\
	& \hspace{150pt} + \frac{N - j}{N - j + 1} \left(1 + a_{N, s} (N - j + 1) (N - j) \right) \Bigg] \\
	& \qquad + \frac{N - j}{N - j + 1}\rho_{j - 1} (2M^2 + \sigma_{j - 1}^2).
\end{aligned}
\end{align}
Note that for every $k \le s \le j - 2$, the identity $a_{N, t} = 1 / (N - t - 1) - 1 / (N - t)$ gives
\begin{align*}
	2a_{N, s} + \sum_{t = s + 1}^{j - 2}a_{N, t} + \frac{N - j}{N - j + 1} & = 2a_{N, s} + \frac1{N - j + 1} - \frac1{N - s - 1} + \frac{N - j}{N - j + 1} \\
	& = 1 - \frac1{N - s} + a_{N, s},
\end{align*}
and
\begin{align*}
	& \sum_{t = s + 1}^{j - 2}a_{N, t}a_{N, s} (N - t) (N - t - 1) + \frac{N - j}{N - j + 1}a_{N, s} (N - j + 1) (N - j) \\
	& = a_{N, s} \left( \sum_{t = s + 1}^{j - 2}1 + (N - j)^2 \right) \\
	& = a_{N, s} \left( j - s - 2 + (N - j)^2 \right) \\
	& = a_{N, s} \left( N - s - 2 + (N - j) (N - j - 1) \right).
\end{align*}
Consequently, each square bracket simplifies to
\begin{align*}
	& 2a_{N, s} + \sum_{t = s + 1}^{j - 2}a_{N, t} \left(1 + a_{N, s} (N - t) (N - t - 1) \right) + \frac{N - j}{N - j + 1} \left(1 + a_{N, s} (N - j + 1) (N - j) \right) \\
	& = 1 - \frac1{N - s} + a_{N, s} + a_{N, s} \left(N - s - 2 + (N - j) (N - j - 1) \right) \\
	& = 1 + a_{N, s} (N - j) (N - j - 1)
\end{align*}
and the coefficient of the last term also satisfies
\[
	\frac{N - j}{N - j + 1} = \frac12 \left(1 + a_{N, j - 1} (N - j) (N - j - 1) \right).
\]
Substituting these identities into~\eqref{eqn:xj-xjk-diff-bound-large} proves~\eqref{eq:app-replacement-iterates}, and completes the induction.

In particular, setting $j = N - 1$ in \eqref{eq:app-replacement-iterates} and using $\sigma_t^2 \le M^2$ and $\sum_{t = k + 1}^{N - 2}\rho_t \le R$ gives
\begin{equation}
\label{eq:app-replacement-bound}
\begin{aligned}
	\expec{\sqnorm{x_{N - 1} - x_{N - 1}^{(k)}}} \le \nu_k^2 + \frac12\sum_{t = k + 1}^{N - 2}\rho_t (2M^2 + \sigma_t^2) \le \nu_k^2 + \frac32RM^2.
\end{aligned}
\end{equation}
Now, conditional on the maps before iteration $k$, the error $\opT_k (x_k) - \opS_k (x_k)$ has mean zero and is independent of $x_{N - 1}^{(k)}$.
Thus, Cauchy--Schwarz inequality, nonexpansivity of $\opT$, \eqref{eq:app-centered-variance} and~\eqref{eq:app-replacement-bound} imply
\begin{align}
	\left|\expec{\inprod{\opT (x_{N - 1})} {\opT_k (x_k) - \opS_k (x_k)}}\right| & = \left|\expec{\inprod{\opT (x_{N - 1}) - \opT (x_{N - 1}^{(k)})} {\opT_k (x_k) - \opS_k (x_k)}}\right| \nonumber \\
	& \le \sqrt{\nu_k^2 \left(\nu_k^2 + \frac32RM^2\right)} \le \nu_k^2 + \frac34RM^2. \label{eq:app-fixed-point-covariance}
\end{align}
Now for any $u, v \in \reals^d$, the identity $\expec{\opT_k (u) - \opS_k (u)} = 0$ gives the variance decomposition
\[
	\expec{\sqnorm{\opT_k (u) - \opT (v)}} = \sqnorm{\opS_k (u) - \opT (v)} + \expec{\sqnorm{\opT_k (u) - \opS_k (u)}}.
\]
Subtracting the last term from~\eqref{eq:monotone-bias} therefore yields
\begin{equation}
\label{eq:app-mean-map-pointwise}
\begin{aligned}
	\sqnorm{\opS_k (u) - \opT (v)} \le {} & \sqnorm{u - v} + \rho_k \bigl(\sqnorm{u - x_\star} + \sqnorm{v - x_\star}\bigr) + \sigma_k^2 - \expec{\sqnorm{\opT_k (u) - \opS_k (u)}}.
\end{aligned}
\end{equation}
Here the expectation is taken over $\opT_k$ with $u$ fixed, so both sides are deterministic functions of $(u, v)$, and in particular~\eqref{eq:app-mean-map-pointwise} holds pathwise with $(u, v) = (x_k, x_{N - 1})$.
Thus, almost surely,
\begin{equation}
\label{eq:app-mean-map-iterates}
\begin{aligned}
	& \sqnorm{\opS_k (x_k) - \opT (x_{N - 1})} \\
	& \le \sqnorm{x_k - x_{N - 1}} + \sigma_k^2 + \rho_k \bigl(\sqnorm{x_k - x_\star} + \sqnorm{x_{N - 1} - x_\star}\bigr) - \condexp{\sqnorm{\opT_k (x_k) - \opS_k (x_k)}}{x_k}.
\end{aligned}
\end{equation}
Now we manipulate the quantities $Q_k$ appearing in \eqref{eq:app-residual-comparison} as follows:
\begin{align*}
	Q_k & = \frac12 \left(\sqnorm{x_k - x_{N - 1}} - \sqnorm{\opT_k (x_k) - \opT (x_{N - 1})}\right) \\
	& = \frac12 \left(\sqnorm{x_k - x_{N - 1}} - \sqnorm{\opS_k (x_k) - \opT (x_{N - 1})}\right) - \frac12\sqnorm{\opT_k (x_k) - \opS_k (x_k)} \\
	& \quad - \inprod{\opS_k (x_k)}{\opT_k (x_k) - \opS_k (x_k)} + \inprod{\opT (x_{N - 1})}{\opT_k (x_k) - \opS_k (x_k)}.
\end{align*}
By independence of $x_k$ and $\opT_k$, we have $\condexp{\opT_k (x_k) - \opS_k (x_k)}{x_k} = 0$, which implies
\begin{align*}
	& \expec{\inprod{\opS_k (x_k)}{\opT_k (x_k) - \opS_k (x_k)}} = \expec{\inprod{\opS_k (x_k)} {\condexp{\opT_k (x_k) - \opS_k (x_k)}{x_k}}} = 0 \\
	& \expec{\condexp{\sqnorm{\opT_k (x_k) - \opS_k (x_k)}}{x_k}} = \expec{\sqnorm{\opT_k (x_k) - \opS_k (x_k)}} = \nu_k^2.
\end{align*}
Taking expectations in the expansion of $Q_k$ and applying~\eqref{eq:app-mean-map-iterates} now shows explicitly how the variance terms cancel:
\begin{align*}
	\expec{Q_k} & \ge - \frac{\sigma_k^2}{2} - \frac{\rho_k}{2} \left(\expec{\sqnorm{x_k - x_\star}} + \expec{\sqnorm{x_{N - 1} - x_\star}}\right) + \frac{\nu_k^2}{2} - \frac{\nu_k^2}{2} \\
	& \quad + \expec{\inprod{\opT (x_{N - 1})} {\opT_k (x_k) - \opS_k (x_k)}} \\
	& \ge - \frac{\sigma_k^2}{2} - \rho_kM^2 + \expec{\inprod{\opT (x_{N - 1})} {\opT_k (x_k) - \opS_k (x_k)}},
\end{align*}
where the last inequality uses~\eqref{eq:iterate-moment-bound}.
Finally,~\eqref{eq:app-fixed-point-covariance} and~\eqref{eq:app-centered-variance} yield
\begin{align*}
	\expec{Q_k} & \ge - \frac{\sigma_k^2}{2} - \rho_kM^2 - \nu_k^2 - \frac34RM^2 \\
	& \ge - \frac{\sigma_k^2}{2} - \rho_kM^2 - \bigl(\sigma_k^2 + 2\rho_kM^2\bigr) - \frac34RM^2 \\
	& = -\frac32\sigma_k^2 - 3\rho_kM^2 - \frac34RM^2.
\end{align*}
Using this estimate in~\eqref{eq:app-residual-comparison} gives
\begin{align*}
	\expec{\sqnorm{x_{N - 1} - \opT (x_{N - 1})}} & \le \frac{4D^2}{N^2} + 6\sum_{k = 0}^{N - 2}a_{N, k}\sigma_k^2 \\
	& \quad + 12M^2\sum_{k = 0}^{N - 2}a_{N, k}\rho_k + 3RM^2\sum_{k = 0}^{N - 2}a_{N, k}.
\end{align*}
Since $a_{N, k} \le 1$ and $\sum_k a_{N, k} = 1 - 1 / N < 1$, the last two terms are bounded by $15RM^2$, proving Theorem~\ref{thm:outer-fixed-point}.

\subsection{Proof of Corollary~\ref{cor:outer-monotone}}

For this result, we evaluate one more random map $\opT_{N - 1}$ because $y_N = \opT_{N - 1} (x_{N - 1})$ is the output of the algorithm.
Therefore, we let $R = \sum_{k = 0}^{N - 1}\rho_k$ and $M^2 = e^R \left(D^2 + \sum_{k = 0}^{N - 1}\sigma_k^2 \right)$.
The same moment bound~\eqref{eq:iterate-moment-bound} still applies to $x_{N - 1}$.
Now by~\eqref{eq:monotone-bias} we have
\[
	\expec{\sqnorm{\opT_{N - 1} (x_{N - 1}) - \opT (x_{N - 1})}} \le 2\rho_{N - 1}M^2 + \sigma_{N - 1}^2.
\]
Observe that because $\opT = \opJ_{\alpha \opF}$, we have $x_{N - 1} - \opT (x_{N - 1}) = \alpha\opF (\opT (x_{N - 1}))$, so
\begin{align*}
	\sqnorm{\opF (y_N)} & \le 2\sqnorm{\opF (y_N) - \opF (\opT (x_{N - 1}))} + 2\sqnorm{\opF (\opT (x_{N - 1}))} \\
	& \le 2L^2 \sqnorm{y_N - \opT (x_{N - 1})} + \frac{2}{\alpha^2} \sqnorm{x_{N - 1} - \opT (x_{N - 1})} \\
	& = 2L^2 \sqnorm{\opT_{N - 1} (x_{N - 1}) - \opT (x_{N - 1})} + \frac{2}{\alpha^2} \sqnorm{x_{N - 1} - \opT (x_{N - 1})} .
\end{align*}
Taking expectation and applying the bounds from \cref{thm:outer-fixed-point}, we obtain
\begin{align*}
	\expec{\sqnorm{\opF (y_N)}} & \le \frac2{\alpha^2}\expec{\sqnorm{x_{N - 1} - \opT (x_{N - 1})}} + 2L^2\expec{\sqnorm{\opT_{N - 1} (x_{N - 1}) - \opT (x_{N - 1})}} \\
	& \le \frac{8D^2}{\alpha^2N^2} + \frac{12}{\alpha^2}\sum_{k = 0}^{N - 2}a_{N, k}\sigma_k^2 + 2L^2\sigma_{N - 1}^2 + M^2 \left(\frac{30}{\alpha^2}\sum_{k = 0}^{N - 2}\rho_k + 4L^2\rho_{N - 1}\right).
\end{align*}
Because $\alpha L \le 1$ and $a_{N, k} \le \frac{2}{(N - k)^2}$, the last line is upper-bounded by~\eqref{eq:abstract-outer-bound}, completing the proof.

\clearpage
\section{Analysis of approximate resolvents for monotone problems}
\label{app:monotone}

The analysis below verifies Condition~\ref{ass:monotone-Tk} for the finite maps computed by Algorithm~\ref{alg:resolvent}.
Exact sampled resolvents appear only in the proof of these map properties.

\subsection{Basic handy lemmas}

\begin{lemma}
\label{lem:simple-resolvent-bound}
Let $\opG \colon \reals^d \to \reals^d$ be single-valued and maximally monotone.
For any $x \in \reals^d$, we have $\norm{x - \opJ_\opG (x)} \le \norm{\opG (x)}$.
\end{lemma}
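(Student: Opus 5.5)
Set $y = \opJ_\opG (x)$. By the definition of the resolvent, $y$ is characterized by
\[
	y + \opG (y) = x, \qquad\text{equivalently}\qquad x - y = \opG (y).
\]
This is well defined because $\opG$ is single-valued and maximally monotone, so $\opJ_\opG$ is single-valued on all of $\reals^d$. The claim therefore becomes $\norm{\opG (y)} \le \norm{\opG (x)}$, i.e., the operator norm at the resolvent point is no larger than at the original point. The plan is to compare $\opG (x)$ with $\opG (y)$ using monotonicity, with $x - y$ playing the role of the displacement.

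Monotonicity of $\opG$ applied to the pair $(x, y)$ gives $\inprod{\opG (x) - \opG (y)}{x - y} \ge 0$. Substituting $x - y = \opG (y)$ turns this into
\[
	\inprod{\opG (x)}{\opG (y)} \ge \sqnorm{\opG (y)} .
\]
Cauchy--Schwarz on the left-hand side then gives $\norm{\opG (x)}\norm{\opG (y)} \ge \sqnorm{\opG (y)}$. If $\opG (y) = 0$, then $x = y$ and the claim is trivial. Otherwise we divide by $\norm{\opG (y)}$ to get $\norm{\opG (y)} \le \norm{\opG (x)}$, which is exactly $\norm{x - \opJ_\opG (x)} \le \norm{\opG (x)}$.

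There is no real obstacle here. The only point needing care is to use single-valuedness and maximality, which ensure the resolvent is defined everywhere and that $\opG (y)$ is a genuine vector, so the substitution $x - y = \opG (y)$ is legitimate.
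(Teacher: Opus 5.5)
Your proof is correct, but it takes a slightly different route from the paper. The paper never introduces $y = \opJ_\opG(x)$ or uses monotonicity directly. It notes that $x = \opJ_\opG(x + \opG(x))$ and applies nonexpansiveness of the resolvent (quoted in the preliminaries) to the points $x + \opG(x)$ and $x$, which gives $\norm{x - \opJ_\opG(x)} \le \norm{\opG(x)}$ in one line. Your argument is essentially that fact unpacked at this particular pair of points. Firm nonexpansiveness of $\opJ_\opG$ there reads $\sqnorm{x - y} \le \inprod{x - y}{\opG(x)}$, and substituting $x - y = \opG(y)$ turns it into exactly your inequality $\inprod{\opG(x)}{\opG(y)} \ge \sqnorm{\opG(y)}$. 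Your Cauchy--Schwarz step is then the standard passage from firm nonexpansiveness to nonexpansiveness. The paper's version is shorter given the known fact. Yours is self-contained and uses monotonicity only at the single pair $(x, y)$; maximality enters only to guarantee that $y$ exists, as in the paper. It also makes visible the reading $\norm{\opG(\opJ_\opG(x))} \le \norm{\opG(x)}$, i.e., a proximal step does not increase the operator norm, together with the slightly stronger inner-product inequality behind it.
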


\begin{proof}
Note that $x = \opJ_\opG (x + \opG (x))$, so
\begin{align*}
	\norm{x - \opJ_\opG (x)} = \norm{\opJ_\opG (x + \opG (x)) - \opJ_\opG (x)} \le \norm{(x + \opG (x)) - x} = \norm{\opG (x)} .
\end{align*}

\end{proof}

\begin{lemma}
\label{lem:resolvent}
Under Assumptions~\ref{ass:oracle} and~\ref{ass:samplewise}, for $\beta > 0$ and any deterministic $u, v \in \reals^d$,
\begin{align}
\label{eq:resolvent-comparison}
	\expec{\sqnorm{\opJ_{\beta\opF (\cdot; \xi)} (u) - \opJ_{\beta\opF} (v)}} \le \sqnorm{u - v} + \beta^2\sigma^2.
\end{align}
\end{lemma}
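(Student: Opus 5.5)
We need to prove $\expec{\sqnorm{\opJ_{\beta\opF(\cdot;\xi)}(u) - \opJ_{\beta\opF}(v)}} \le \sqnorm{u-v} + \beta^2\sigma^2$. The plan is to compare the sampled resolvent at $u$ with the sampled resolvent at a shifted point whose image is exactly $\opJ_{\beta\opF}(v)$, use nonexpansiveness of the sampled resolvent, and then extract the variance term through a cross-term that vanishes in expectation.

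\textbf{Step 1 (the shifted point).} Set $p = \opJ_{\beta\opF}(v)$, which is deterministic. By definition $v = p + \beta\opF(p)$. Define the random point
\[
w_\xi = p + \beta\opF(p;\xi).
\]
Since $\opF(\cdot;\xi)$ is monotone and continuous almost surely by Assumption~\ref{ass:samplewise}, the resolvent $\opJ_{\beta\opF(\cdot;\xi)}$ is well defined and firmly nonexpansive. Moreover, $\opJ_{\beta\opF(\cdot;\xi)}(w_\xi) = p$.

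\textbf{Step 2 (firm nonexpansiveness).} Write $J_\xi = \opJ_{\beta\opF(\cdot;\xi)}$. Firm nonexpansiveness of $J_\xi$ gives
\[
\sqnorm{J_\xi(u) - p} \le \sqnorm{u - w_\xi} - \sqnorm{(u - J_\xi(u)) - (w_\xi - p)}.
\]
Plain nonexpansiveness would only give $\sqnorm{u-w_\xi}$, and that is already sufficient: since $w_\xi = v + \beta(\opF(p;\xi) - \opF(p))$, we have
\[
\sqnorm{u - w_\xi} = \sqnorm{u - v} - 2\beta\inprod{u - v}{\opF(p;\xi) - \opF(p)} + \beta^2\sqnorm{\opF(p;\xi) - \opF(p)}.
\]

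\textbf{Step 3 (expectation).} Because $u$, $v$, $p$ are deterministic, unbiasedness in Assumption~\ref{ass:oracle} makes the cross term vanish in expectation. The variance bound, applied at the deterministic point $p$, bounds the last term by $\beta^2\sigma^2$. Together these give the claim. Joint measurability of $\opF(x;\xi)$ ensures that $J_\xi(u)$ is measurable; one can see this, for example, by writing it as the limit of the contraction iteration $q \mapsto u - \beta\opF(q;\xi)$ for small $\beta$, or directly via continuity of the solution map.

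\textbf{Main obstacle.} The only real difficulty is recognising the shift trick: the noise should be moved into the \emph{argument} of the sampled resolvent, evaluated at the deterministic point $p$. Comparing $J_\xi(u)$ with $J(v)$ directly instead would produce a noise term at the random point $J_\xi(u)$, whose cross term does not vanish. Technically, the remaining care is in the measurability and integrability details.
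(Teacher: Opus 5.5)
Your proof is correct and matches the paper's argument: the paper also sets $y = \opJ_{\beta\opF}(v)$ and notes $y = \opJ_{\beta\opF(\cdot;\xi)}(v + \beta\delta_\xi)$ with $\delta_\xi = \opF(y;\xi) - \opF(y)$ (this shifted point is your $w_\xi$), applies nonexpansiveness of the sampled resolvent, and concludes using $\expec{\delta_\xi} = 0$ and $\expec{\sqnorm{\delta_\xi}} \le \sigma^2$. The firm-nonexpansiveness refinement and the measurability remark are not needed (the contraction-iteration argument would in any case only cover $\beta L < 1$), and they do not affect correctness.
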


\begin{proof}
Let $y = \opJ_{\beta\opF} (v)$ and $\delta_\xi = \opF (y; \xi) - \opF (y)$.
Then we have $v = y + \beta\opF (y) = y + \beta\opF (y; \xi) - \beta\delta_\xi$.
Therefore,
\begin{align*}
	y + \beta\opF (y; \xi) = v + \beta\delta_\xi \iff \opJ_{\beta\opF} (v) = y = \opJ_{\beta\opF (\cdot; \xi)} (v + \beta\delta_\xi) .
\end{align*}
This implies, for any $u, v \in \reals^d$,
\begin{align*}
	\expec{\sqnorm{\opJ_{\beta\opF (\cdot; \xi) (u)} - \opJ_{\beta\opF} (v)}} & = \expec{\sqnorm{\opJ_{\beta\opF (\cdot; \xi) (u)} - \opJ_{\beta\opF (\cdot; \xi)} (v + \beta\delta_\xi)} } \\
	& \le \expec{\sqnorm{u - v - \beta\delta_\xi} } \\
	& \le \sqnorm{u - v} + \beta^2 \sigma^2
\end{align*}
where the last line uses $\expec{\delta_\xi} = 0$ and $\expec{\sqnorm{\delta_\xi}} \le \sigma^2$.

\end{proof}

\subsection{Proof of Lemma~\ref{lem:actual-resolvent}}
\label{app:finite-streaming}

Let us denote the ideal iterates from \eqref{eq:streaming-ideal} by $z_j (u)$, so that $z_{B_k} (u) = \widehat\opT_k (u)$.
Because $\opT = \opJ_{\alpha\opF}$, for any $v \in \reals^d$, we have $v = \opT (v) + \alpha\opF (\opT (v))$.
Multiplying $\gamma_j$ throughout and rearranging, we obtain
\[
	\opT (v) + \alpha\gamma_j \opF (\opT (v)) = (1 - \gamma_j) \opT (v) + \gamma_j v \iff \opT (v) = \opJ_{\alpha\gamma_j\opF} ( (1 - \gamma_j) \opT (v) + \gamma_jv).
\]
Therefore, applying~\eqref{eq:resolvent-comparison} with $\beta = \alpha\gamma_j$, we obtain
\begin{align*}
	& \expec{\sqnorm{z_{j + 1} (u) - \opT (v)}} \\
	& = \expec{\sqnorm{\opJ_{\alpha\gamma_j \opF (\cdot; \xi_j)} \left( (1 - \gamma_j) z_j (u) + \gamma_j u\right) - \opJ_{\alpha\gamma_j\opF} \left( (1 - \gamma_j) \opT (v) + \gamma_j v \right)} } \\
	& \le \sqnorm{(1 - \gamma_j) z_j (u) + \gamma_j u - \left( (1 - \gamma_j) \opT (v) + \gamma_j v \right)} + \alpha^2 \gamma_j^2 \sigma^2 \\
	& = \sqnorm{(1 - \gamma_j) (z_j (u) - \opT (v)) + \gamma_j (u - v)} + \alpha^2 \gamma_j^2 \sigma^2 \\
	& \le (1 - \gamma_j) \sqnorm{ z_j (u) - \opT (v) } + \gamma_j \sqnorm{u - v} + \alpha^2 \gamma_j^2 \sigma^2
\end{align*}
where the expectation is conditioned on the randomness revealed before $\xi_j$, and the last line uses convexity of $\sqnorm{\cdot}$.
Now subtracting $\sqnorm{u - v}$ from both sides of the above, using $\gamma_j = \frac{2}{j + 2}$ and taking total expectation, we obtain
\[
\begin{aligned}
	\expec{\sqnorm{z_{j + 1} (u) - \opT (v)}} - \sqnorm{u - v} \le \frac{j}{j + 2} \left(\expec{\sqnorm{z_j (u) - \opT (v)}} - \sqnorm{u - v}\right) + \frac{4\alpha^2\sigma^2}{(j + 2)^2}.
\end{aligned}
\]
Multiplying by $(j + 1) (j + 2)$ and chaining the resulting inequality for $j = 0, \dots, B_k - 1$ we obtain
\[
\begin{aligned}
	B_k (B_k + 1) \left(\expec{\sqnorm{z_{B_k} (u) - \opT (v)}} - \sqnorm{u - v}\right) \le 4 \alpha^2\sigma^2\sum_{j = 0}^{B_k - 1}\frac{j + 1}{j + 2} \le 4\alpha^2\sigma^2 B_k.
\end{aligned}
\]
Dividing both sides by $B_k (B_k + 1)$ and using $z_{B_k} (u) = \widehat\opT (u)$, we obtain~\eqref{eq:streaming-comparison}.

Fix $k$ and $u \in \reals^d$.
Consider the iterates $w_j$ from Algorithm~\ref{alg:resolvent} and the ideal iterates $z_j = z_j (u)$ from \eqref{eq:streaming-ideal}, obtained by using a same sequence of samples.
Below, we drop the $u$-dependence in $z_j$ and $w_j$.
Recall that $p_j = (1 - \gamma_j) w_j + \gamma_ju$ and $w_{j + 1}$ is obtained by the following iteration: $q_0 = p_j$,
\[
	q_{r + 1} = p_j - \alpha\gamma_j \opF (q_r; \xi_j)
\]
for $j = 0, \dots, m_{k, j}$, which is the Picard iteration with respect to the operator $x \mapsto p_j - \alpha\gamma_j \opF (x; \xi_j)$, which has the contraction factor $\frac{1}{j + 2}$ because
\begin{align*}
	& \norm{\left(p_j - \alpha\gamma_j \opF (x; \xi_j) \right) - \left(p_j - \alpha\gamma_j \opF (y; \xi_j) \right)} \\
	& \quad \le \alpha\gamma_j \norm{\opF (x; \xi_j) - \opF (y; \xi_j)} \le \alpha L \gamma_j \norm{x - y} = \frac{1}{j + 2} \norm{x - y}
\end{align*}
for any $x, y \in \reals^d$.
Therefore,
\begin{align}
	\norm{w_{j + 1} - \opJ_{\alpha\gamma_j\opF (\cdot; \xi_j)} (p_j)} & \le \left(\frac{1}{j + 2}\right)^{m_{k, j} + 1} \norm{p_j - \opJ_{\alpha\gamma_j\opF (\cdot; \xi_j)} (p_j)} \nonumber \\
	& \le \frac{\rho_k}{16 (j + 2)} \norm{p_j - \opJ_{\alpha\gamma_j\opF (\cdot; \xi_j)} (p_j)} \nonumber \\
	& \le \frac{\rho_k\alpha\gamma_j}{16 (j + 2)}\norm{\opF (p_j; \xi_j)} \label{eq:wj-pj-resolvent-diff}
\end{align}
where the second line uses $m_{k, j} = \left\lceil \frac{\log (16 / \rho_k)}{\log (j + 2)} \right\rceil$ and the last line uses \cref{lem:simple-resolvent-bound} with $\opG = \alpha\gamma_j \opF (\cdot; \xi_j)$.
On the other hand, we have
\begin{align}
	\norm{\opJ_{\alpha\gamma_j\opF (\cdot; \xi_j)} (p_j) - x_\star} & \le \norm{\opJ_{\alpha\gamma_j\opF (\cdot; \xi_j)} (p_j) - \opJ_{\alpha\gamma_j\opF (\cdot; \xi_j)} (x_\star)} + \norm{\opJ_{\alpha\gamma_j\opF (\cdot; \xi_j)} (x_\star) - x_\star} \nonumber \\
	& \le \norm{p_j - x_\star} + \alpha\gamma_j \norm{\opF (x_\star; \xi_j)} \label{eq:pj-resolvent-star-diff}
\end{align}
where the last line again uses \cref{lem:simple-resolvent-bound} with $\opG = \alpha\gamma_j \opF (\cdot; \xi_j)$, and
\begin{align}
\label{eq:Fpj-norm-bound}
	\norm{\opF (p_j ; \xi_j)} \le \norm{\opF (p_j ; \xi_j) - \opF (x_\star; \xi_j)} + \norm{\opF (x_\star; \xi_j)} \le L\norm{p_j - x_\star} + \norm{\opF (x_\star; \xi_j)} .
\end{align}

Now for any random quantities, consider their $L^2$-norms $\norm{X}_2 := \expec{\sqnorm{X}}^{1 / 2}$.
Using Minkowski inequality and \eqref{eq:wj-pj-resolvent-diff}--\eqref{eq:Fpj-norm-bound}, we have
\begin{align*}
	\norm{w_{j + 1} - x_\star}_2 & \le \norm{w_{j + 1} - \opJ_{\alpha\gamma_j\opF (\cdot; \xi_j)} (p_j)}_2 + \norm{\opJ_{\alpha\gamma_j\opF (\cdot; \xi_j)} (p_j) - x_\star}_2 \\
	& \le \frac{\rho_k\alpha\gamma_j}{16 (j + 2)}\norm{\opF (p_j; \xi_j)}_2 + \norm{p_j - x_\star}_2 + \alpha\gamma_j \norm{\opF (x_\star; \xi_j)}_2 \\
	& \le \frac{\rho_k\alpha\gamma_j}{16 (j + 2)} \left( L\norm{p_j - x_\star}_2 + \norm{\opF (x_\star; \xi_j)}_2 \right) + \norm{p_j - x_\star}_2 + \alpha\gamma_j \sigma \\
	& \le \left( 1 + \frac{\rho_k \gamma_j}{32} \right) \norm{p_j - x_\star}_2 + \left(1 + \frac{\rho_k}{16}\right) \alpha\gamma_j\sigma \\
	& \le \left( 1 + \frac{\rho_k \gamma_j}{32} \right) \left( (1 - \gamma_j) \norm{w_j - x_\star}_2 + \gamma_j \norm{u - x_\star}_2 \right) + \left(1 + \frac{\rho_k}{16}\right) \alpha\gamma_j\sigma \\
	& \le \left( 1 - \frac{3\gamma_j}{4} \right) \norm{w_j - x_\star}_2 + \frac{5\gamma_j}{4} \norm{u - x_\star}_2 + \frac{3\alpha\gamma_j \sigma}{2}
\end{align*}
where we use $\frac{\alpha L}{j + 2} = \frac{1}{2 (j + 2)} \le \frac{1}{2}$,
\begin{align*}
	\norm{\opF (x_\star; \xi_j)}_2 = \expec{\sqnorm{\opF (x_\star; \xi_j) - \opF (x_\star)}}^{1 / 2} \le \sigma ,
\end{align*}
and $\rho_k \le 1$.
Noting that $p_{j + 1}$ is a convex combination of $w_{j + 1}$ and $u$, we can use induction starting from $w_0 = u$ to obtain
\begin{align}
\label{eq:wj-pj-dist-max}
	\max \left\{ \norm{w_j - x_\star}_2 , \norm{p_j - x_\star}_2 \right\} \le 2 \left( \norm{u - x_\star} + \alpha\sigma \right) .
\end{align}
Now
\begin{align*}
	& \norm{w_{j + 1} - z_{j + 1}}_2 \\
	& \le \norm{w_{j + 1} - \opJ_{\alpha\gamma_j\opF (\cdot; \xi_j)} (p_j)}_2 + \norm{\opJ_{\alpha\gamma_j\opF (\cdot; \xi_j)} (p_j) - z_{j + 1}}_2 \\
	& \le \norm{w_{j + 1} - \opJ_{\alpha\gamma_j\opF (\cdot; \xi_j)} (p_j)}_2 + \norm{\opJ_{\alpha\gamma_j\opF (\cdot; \xi_j)} \left( (1 - \gamma_j) w_j + \gamma_j u \right) - \opJ_{\alpha\gamma_j\opF (\cdot; \xi_j)} \left( (1 - \gamma_j) z_j + \gamma_j u \right)}_2 \\
	& \le \frac{\rho_k\alpha\gamma_j}{16 (j + 2)}\norm{\opF (p_j; \xi_j)}_2 + \norm{(1 - \gamma_j) (w_j - z_j)}_2 \\
	& \le (1 - \gamma_j) \norm{w_j - z_j}_2 + \frac{\rho_k\gamma_j}{16 (j + 2)} (\norm{u - x_\star} + 2\alpha\sigma) ,
\end{align*}
where the third inequality uses \eqref{eq:wj-pj-resolvent-diff}, and the last inequality combines \eqref{eq:Fpj-norm-bound} with \eqref{eq:wj-pj-dist-max}.
Chaining this result for $j = 0, \dots, B_k - 1$ and using $w_0 = z_0$ and $\prod_{s = j + 1}^{B_k - 1} (1 - \gamma_s) = \frac{(j + 1) (j + 2)}{B_k (B_k + 1)}$, we obtain
\begin{align}
	\sqrt{\expec{\sqnorm{\opT_k (u) - \widehat\opT_k (u)}}} & = \norm{w_{B_k} - z_{B_k}}_2 \nonumber \\
	& \le \sum_{j = 0}^{B_k - 1}\frac{j + 1}{j + 2} \frac{\rho_k}{8B_k (B_k + 1)} (\norm{u - x_\star} + 2\alpha\sigma) \nonumber \\
	& \le \frac{\rho_k}{8 (B_k + 1)} (\norm{u - x_\star} + 2\alpha\sigma). \label{eq:app-local-streaming}
\end{align}
Now we use Young's inequality with \eqref{eq:streaming-comparison} and~\eqref{eq:app-local-streaming}:
\begin{align*}
	\expec{\sqnorm{\opT_k (u) - \opT (v)}} & \le \left(1 + \frac{\rho_k}{8}\right) \expec{\sqnorm{\widehat\opT_k (u) - \opT (v)}} + \left(1 + \frac{8}{\rho_k}\right) \expec{\sqnorm{\opT_k (u) - \widehat\opT_k (u)}} \\
	& \le \left(1 + \frac{\rho_k}{8}\right) \left(\sqnorm{u - v} + \frac{4\alpha^2\sigma^2}{B_k + 1}\right) + \frac{\rho_k^2 \left(1 + \frac{8}{\rho_k}\right)}{64 (B_k + 1)^2} (\norm{u - x_\star} + 2\alpha\sigma)^2 \\
	& \le \left(1 + \frac{\rho_k}{8}\right) \sqnorm{u - v} + \frac{\rho_k \left(1 + \frac{\rho_k}{8}\right)}{4 (B_k + 1)^2} \sqnorm{u - x_\star} \\
	& \quad + \left(1 + \frac{\rho_k}{8}\right) \left( \frac{4}{B_k + 1} + \frac{\rho_k}{(B_k + 1)^2} \right) \alpha^2 \sigma^2 \\
	& \le \sqnorm{u - v} + \frac{\rho_k}{4} \left( 1 + \frac{\left(1 + \frac{\rho_k}{8}\right)}{(B_k + 1)^2} \right) \sqnorm{u - x_\star} + \frac{\rho_k}{4} \sqnorm{v - x_\star} \\
	& \quad + \left(1 + \frac{\rho_k}{8}\right) \frac{4 + \frac{\rho_k}{2}}{B_k + 1} \alpha^2 \sigma^2 \\
	& \le \sqnorm{u - v} + \rho_k \left( \sqnorm{u - x_\star} + \sqnorm{v - x_\star} \right) + \frac{6\alpha^2 \sigma^2}{B_k + 1}
\end{align*}
where the third inequality uses $\frac{\rho_k^2 \left(1 + \frac{8}{\rho_k}\right)}{64 (B_k + 1)^2} = \frac{\rho_k \left(1 + \frac{\rho_k}{8}\right)}{8 (B_k + 1)^2}$ and $(\norm{u - x_\star} + 2\alpha\sigma)^2 \le 2\sqnorm{u - x_\star} + 8\alpha^2\sigma^2$, the fourth inequality uses $\frac{\rho_k}{8}\sqnorm{u - v} \le \frac{\rho_k}{4}\sqnorm{u - x_\star} + \frac{\rho_k}{4}\sqnorm{v - x_\star}$, and the last inequality follows from $B_k \ge 1$ and $0 < \rho_k \le 1$, which proves \eqref{eq:monotone-bias} with $\sigma_k^2 = \frac{6\alpha^2 \sigma^2}{B_k + 1}$.

Now we turn to showing \eqref{eq:monotone-stability}.
First, since $z_{j + 1} (u) = \opJ_{\alpha\gamma_j\opF (\cdot; \xi_j)} \bigl( (1 - \gamma_j) z_j (u) + \gamma_j u\bigr)$, we have
\[
	\norm{z_{j + 1} (u) - z_{j + 1} (v)} \le (1 - \gamma_j) \norm{z_j (u) - z_j (v)} + \gamma_j\norm{u - v}
\]
for any $u, v \in \reals^d$ by nonexpansivity of $\opJ_{\alpha\gamma_j\opF (\cdot; \xi_j)}$.
Induction starting from $z_0 (u) = u$ and $z_0 (v) = v$ shows that $\widehat\opT_k$ is pathwise nonexpansive, i.e.,
\begin{align*}
	\norm{\widehat\opT_k (u) - \widehat\opT_k (v)} = \norm{z_{B_k} (u) - z_{B_k} (v)} \le \norm{u - v} .
\end{align*}
We use Young's inequality multiple times with \eqref{eq:app-local-streaming}:
\begin{align*}
	& \expec{\sqnorm{\opT_k (u) - \opT_k (v)}} \\
	& \le \left(1 + \frac{\rho_k}{8}\right) \expec{\sqnorm{\widehat\opT_k (u) - \widehat\opT_k (v)}} + \left(1 + \frac{8}{\rho_k}\right) \expec{\sqnorm{\left( \opT_k (u) - \widehat\opT_k (u) \right) - \left( \opT_k (v) - \widehat\opT_k (v) \right)}} \\
	& \le \left(1 + \frac{\rho_k}{8}\right) \sqnorm{u - v} + 2 \left(1 + \frac{8}{\rho_k}\right) \left( \expec{\sqnorm{\opT_k (u) - \widehat\opT_k (u)}} + \expec{\sqnorm{\opT_k (v) - \widehat\opT_k (v)}} \right) \\
	& \le \left(1 + \frac{\rho_k}{8}\right) \sqnorm{u - v} + \frac{2\rho_k^2 \left(1 + \frac{8}{\rho_k}\right)}{64 (B_k + 1)^2} \left( (\norm{u - x_\star} + 2\alpha\sigma)^2 + (\norm{v - x_\star} + 2\alpha\sigma)^2 \right) \\
	& \le \left(1 + \frac{\rho_k}{8}\right) \sqnorm{u - v} + \frac{\rho_k \left(1 + \frac{\rho_k}{8}\right)}{2 (B_k + 1)^2} \left( \sqnorm{u - x_\star} + \sqnorm{v - x_\star} + 8\alpha^2 \sigma^2 \right) \\
	& \le \sqnorm{u - v} + \frac{\rho_k}{4} \left( 1 + \frac{2 \left(1 + \frac{\rho_k}{8}\right)}{(B_k + 1)^2} \right) \left( \sqnorm{u - x_\star} + \sqnorm{v - x_\star} \right) + \frac{4\rho_k \left(1 + \frac{\rho_k}{8}\right)}{(B_k + 1)^2} \alpha^2 \sigma^2 \\
	& \le \sqnorm{u - v} + \rho_k \left( \sqnorm{u - x_\star} + \sqnorm{v - x_\star} \right) + \frac{6\rho_k \alpha^2 \sigma^2}{B_k + 1}
\end{align*}
where the last line again uses $B_k \ge 1$ and $0 < \rho_k \le 1$.

Finally, it remains to bound the number of oracle evaluations at iteration $k$, which is
\[
	\sum_{j = 0}^{B_k - 1} (m_{k, j} + 1) = \sum_{j = 0}^{B_k - 1} \left(1 + \left\lceil\frac{\log (16 / \rho_k)}{\log (j + 2)}\right\rceil \right) \le 2B_k + \log \frac{16}{\rho_k} \sum_{j = 0}^{B_k - 1}\frac1{\log (j + 2)} .
\]
We can bound the final summation using $\sum_{j = 0}^{B_k - 1} \frac{1}{\log (j + 2)} \le \frac{3 (B_k + 1)}{\log (B_k + 1)}$, which completes the proof.

\subsection{Proof of Corollary~\ref{cor:weighted}}
\label{app:allocation-streaming}

First fix $N \ge 2$ and $\rho_k = 1 / (2N^3)$, and consider arbitrary deterministic integer lengths $B_k \ge 1$.
Since $\log \frac{16}{\rho_k} = \log (32N^3) \le 4\log (eN)$, Lemma~\ref{lem:actual-resolvent} bounds the total oracle cost by
\[
	\cO\! \left(\sum_{k = 0}^{N - 1}B_k + \log (eN) \sum_{k = 0}^{N - 1}\frac{B_k}{\log (B_k + 1)}\right).
\]
For any $b, t \ge 1$,
\[
	\frac b{\log (b + 1)} \le \frac{2b}{t} + \frac{e^{t / 2}}{\log2}.
\]
Indeed, the first term suffices when $b \ge e^{t / 2}$, and the second suffices otherwise.
Applying this with $b = B_k$ and $t = \log \frac{eN}{N - k}$, and using $\sum_{k = 0}^{N - 1} (N - k)^{-1 / 2} \le 2\sqrt N$, we get
\[
	\sum_{k = 0}^{N - 1}\frac{B_k}{\log (B_k + 1)} \le 2\sum_{k = 0}^{N - 1}\frac{B_k}{\log (eN / (N - k))} + \frac{2\sqrt e}{\log2}N.
\]
Also, $\log \frac{eN}{N - k} \le \log (eN)$ implies
\[
	\sum_{k = 0}^{N - 1}B_k \le \log (eN) \sum_{k = 0}^{N - 1}\frac{B_k}{\log (eN / (N - k))}.
\]
Consequently, for any choice of batch-sizes $B_k \ge 1$, the total oracle cost is bounded by
\begin{equation}
\label{eq:app-general-allocation-cost}
	\cO\! \left(\log (eN) \left[N + \sum_{k = 0}^{N - 1} \frac{B_k}{\log (eN / (N - k))}\right] \right).
\end{equation}

We now specialize to $B_k$ in~\eqref{eq:weighted-monotone} and $N$ in Corollary~\ref{cor:weighted}.
Note that $R = \sum_{k = 0}^{N - 1} \frac{1}{2N^3} = \frac{1}{2N^2} \le \frac{1}{8}$.
We use Corollary~\ref{cor:outer-monotone} and Lemma~\ref{lem:actual-resolvent} with $\alpha = \frac{1}{2L}$, and use $e^{1 / 8} < \frac{8}{7}$ and
\[
	R\sum_{k = 0}^{N - 1}\sigma_k^2 = \frac1{2N^2}\sum_{k = 0}^{N - 1}\sigma_k^2 \le \sum_{k = 0}^{N - 1}\frac{\sigma_k^2}{2 (N - k)^2},
\]
to obtain
\begin{align*}
	\expec{\sqnorm{\opF (y_N)}} & \le \frac{704}{7}\frac{L^2D^2}{N^2} + \frac{1728}{7}\sigma^2 \sum_{k = 0}^{N - 1}\frac1{(N - k)^2 (B_k + 1)} \\
	& \le 256 \left(\frac{L^2D^2}{N^2} + \sigma^2\sum_{k = 0}^{N - 1}\frac1{(N - k)^2 (B_k + 1)}\right).
\end{align*}
Because $x \mapsto 1 / (x\sqrt{\log (eN / x)})$ is a decreasing function on $[1, N]$,
\begin{equation}
\label{eq:app-sqrtlog-sum}
\begin{aligned}
	\sum_{k = 0}^{N - 1}\frac1{(N - k) \sqrt{\log (eN / (N - k))}} & \le \frac1{\sqrt{\log (eN)}} + \int_1^N\frac{dx}{x\sqrt{\log (eN / x)}} \\
	& = \frac1{\sqrt{\log (eN)}} + 2 \bigl(\sqrt{\log (eN)} - 1\bigr) \le 2\sqrt{\log (eN)}.
\end{aligned}
\end{equation}
The batch allocation~\eqref{eq:weighted-monotone} therefore satisfies
\begin{equation}
\label{eq:app-allocation-variance}
\begin{aligned}
	\sigma^2\sum_{k = 0}^{N - 1}\frac1{(N - k)^2 (B_k + 1)} & \le \frac{\epsilon^2}{1024\sqrt{\log (eN)}} \sum_{k = 0}^{N - 1}\frac1{(N - k) \sqrt{\log (eN / (N - k))}} \le \frac{\epsilon^2}{512}.
\end{aligned}
\end{equation}
Since $N \ge \frac{32LD}{\epsilon}$, we conclude
\[
	\expec{\sqnorm{\opF (y_N)}} \le \frac{256L^2D^2}{N^2} + 256\sigma^2\sum_{k = 0}^{N - 1}\frac1{(N - k)^2 (B_k + 1)} \le \frac{\epsilon^2}{4} + \frac{\epsilon^2}{2} \le \epsilon^2.
\]
To bound the complexity, observe that
\[
	B_k \le 1 + \frac{1024\sigma^2\sqrt{\log (eN)}}{\epsilon^2} \frac{\sqrt{\log (eN / (N - k))}}{N - k}.
\]
Since $\log \frac{eN}{N - k} \ge 1$, we obtain from~\eqref{eq:app-sqrtlog-sum}:
\begin{align*}
	\sum_{k = 0}^{N - 1}\frac{B_k}{\log (eN / (N - k))} & \le N + \frac{1024\sigma^2\sqrt{\log (eN)}}{\epsilon^2} \sum_{k = 0}^{N - 1}\frac1{(N - k) \sqrt{\log (eN / (N - k))}} \\
	& \le N + \frac{2048\sigma^2}{\epsilon^2}\log (eN).
\end{align*}
Substituting this into~\eqref{eq:app-general-allocation-cost} therefore bounds the total number of oracle evaluations by
\[
	\cO\! \left(N\log (eN) + \frac{\sigma^2}{\epsilon^2}\log^2 (eN) \right).
\]
Since $N = \cO \left( 1 + \frac{LD}{\epsilon} \right)$ and $\log (eN) = \Theta \left(\log \left(1 + \frac{LD}{\epsilon}\right) \right)$ in the regime $\frac{LD}{\epsilon} \gg 1$, this proves~\eqref{eq:streaming-complexity}.

\clearpage
\section{Experimental details}
\label{app:experiments}

We further specify the tuning and budget normalization of \ref{eq:dual} used in Section~\ref{sec:experiments}.

\paragraph{Experiment 1.}
For scheduled \ref{eq:dual}, we search the same stepsize grid as its constant-batch counterpart, $\eta \in \{0.005, \allowbreak 0.01, \allowbreak 0.025, \allowbreak 0.05, \allowbreak 0.1, \allowbreak 0.25, \allowbreak 0.5, \allowbreak 0.75, \allowbreak 1\}$, and $N - 1 \in \{32, \allowbreak 47, \allowbreak 69, \allowbreak 101, \allowbreak 148, \allowbreak 217, \allowbreak 317, \allowbreak 465, \allowbreak 682, \allowbreak 1000\}$.
For each candidate $N$, we use
\[
	B_k = \left\lceil\frac{C}{\sqrt{(N - k) (N - k - 1)}}\right\rceil, \quad k = 0, \ldots, N - 2, \qquad \sum_{k = 0}^{N - 2}B_k \le Q = 20000,
\]
and choose the largest feasible $C > 0$.
This preserves the allocation in~\eqref{eq:optimal-cocoercive} while adhering the oracle budget constraint.
Every candidate pair is evaluated on all three tuning seeds, and the selected values are $\eta = 0.75$ and $N = 1001$.
The constant-batch counterpart selects $\eta = 0.025$ and $B = 2$, giving $N = 10001$ and the same total oracle cost.

\paragraph{Experiment 2.}
For the practical variant of \ref{eq:dual} (scheduled), we set
\[
	B_k = \max\! \left\{1, \left\lceil C\frac{\sqrt{\log (eN / (N - k))}}{N - k} \right\rceil \right\}, \quad k = 0, \ldots, N - 1, \quad (m + 1) \sum_{k = 0}^{N - 1}B_k \le Q = 2 \times 10^5.
\]
For each candidate $(m, N)$, we choose the largest feasible $C$.
The constant-batch counterpart uses $B = \left\lfloor \frac{Q}{(m + 1) N} \right\rfloor$.
For both versions, we test $m \in \{1, 2, 4, 7\}$ and $N \in \{1200, \allowbreak 2400, \allowbreak 4800, \allowbreak 9600\}$ on a tuning seed.
We then fix $m = 1$ and refine $N \in \{4800, \allowbreak 9600, \allowbreak 16000, \allowbreak 24000, \allowbreak 40000\}$ for the scheduled method and $N \in \{1600, \allowbreak 2400, \allowbreak 2500, \allowbreak 3200, \allowbreak 3333, \allowbreak 4800, \allowbreak 5000\}$ for the constant-batch method, using all three tuning seeds.
The selected scheduled run has $N = 24000$, and the selected constant run has $N = 3200$ and $B = 31$.

For the theory-prescribed version of scheduled \ref{eq:dual}, we set $\rho_k = \frac{1}{2N^3}$ and use the schedule $B_k$ as above, and choose largest $C$ satisfying $\sum_{k = 0}^{N - 1}\sum_{j = 0}^{B_k - 1} (m_{k, j} + 1) \le Q$, with $m_{k, j}$ given by Algorithm~\ref{alg:resolvent}.
The final search uses $N \in \{1400, 1600, \ldots, 3200\}$ on all tuning seeds and selects $N = 2400$.

\end{document}